\documentclass{birkjour}

\usepackage{amsmath, amssymb}
\usepackage[shortlabels]{enumitem}
\usepackage{mathrsfs}
\usepackage{amsthm}
\usepackage{geometry,tikz}
\newtheorem{theorem}{Theorem}
\newtheorem{prop}{Proposition}
\newtheorem{definition}{Definition}
\newtheorem{corollary}{Corollary}

\newtheorem{lem}{Lemma}
\newtheorem{atheo}{Theorem}

\everymath{\displaystyle}
 \setlength{\parindent}{0pt}
\usepackage{xcolor}

\begin{document}
	\title[An IDE with a discontinuous kernel]{Asymptotic regimes of an integro-difference equation with discontinuous kernel} 
	
	\author{Omar Abdul Halim}
	\address{Department of Mathematics and Statistics\\
		University of Northern British Columbia\\
		Prince George, BC, V2N 4Z9}
	\email{abdul@unbc.ca}
	\author{Mohammad El Smaily}
	\address{Department of Mathematics and Statistics\\
		University of Northern British Columbia\\
		Prince George, BC, V2N 4Z9}
	\email{smaily@unbc.ca}
	\thanks{Mohammad El Smaily acknowledges partial support from the Natural Sciences and Engineering Research Council of Canada through the NSERC Discovery Grant RGPIN-2017-04313.}

	\subjclass{92D25, 92D40, 45G10, 39A10}
	
	\keywords{Structured integro-difference equation, principal eigenvalue, extinction, discontinuous kernel, patchy landscape}
	
	\date{October 14, 2021}
	
	\begin{abstract}This paper is concerned with an integral equation that models discrete time dynamics of a population in a patchy landscape. The patches  in the domain are reflected through the discontinuity of the kernel of the integral operator at a finite number of points in the whole domain. We prove the existence and uniqueness of a stationary state under certain assumptions on the principal eigenvalue of the linearized integral operator and the growth term as well. We also derive criteria  under which the population undergoes extinction (in which case the stationary solution is 0 everywhere).
	\end{abstract}
	\maketitle
	\section{Introduction and main results}\label{intro}
	In this paper, we study the long term dynamics of a population in a heterogeneous landscape. The density function of this population obeys an integral equation, which we describe in what follows. The population density in the  $n$th generation, at a location $x,$ is denoted by $u_n(x).$ The growth phase is described by some non-negative function $F,$ and the dispersal phase is described  by a dispersal kernel $k(x, y).$ The probability that an individual, who started its dispersal process at $x,$ will settle in $[y, y + dy)$ is then given by the product $k(x, y)dy.$   The population density in the next generation, at a location $x,$ is then obtained by summing up arrivals at $x$ from all possible locations $y.$ This yields the integral equation
	\begin{equation}\label{IDE}
		u_{n+1}(x)=\int_\Omega k(x,y)F(u_n(y)) \,{\rm d}y,
	\end{equation}
where $u_n(x)$ stands for the density of the population in the $n^\text{th}$ generation  at a position $x.$ 

Let us mention  some past works that relate to the above model. Musgrave and Lutscher \cite{Lutscher.patchy} considered \eqref{IDE} in the case where the domain $\Omega$ consists of patches that result a difference in the dispersal behaviour. At the interface of two patch types, the authors of \cite{Lutscher.patchy} incorporate recent results of Ovaskainen and Cornell \cite{Cornell}  that, in general, lead to a discontinuous density function for the random walker. In other words, a discontinuity in the dispersal kernel $k$ at the interface of these patches appears in the study done in \cite{Lutscher.patchy}. In \cite{Lutscher.patchy}, it is shown that the dispersal kernel can be characterized as the Green’s function of a second-order differential operator. Later, Watmough and Beykzadeh \cite{Watmough} made generalization of the classic Laplace kernel, which includes different dispersal rates in each patch as well as different degrees of bias at the patch boundaries. We also mention the work of Lewis et al. \cite{Lewis}, which considers the same model \eqref{IDE}, but with a different set of assumptions on the kernel $k.$ In \cite{Lewis}, the kernel $k$ is assumed to be  in the form $k(x,y)=k(x-y).$ The more important difference between our work and \cite{Lewis} is that the kernel $k$ is assumed to be continuous in \cite{Lewis}. In our present work, we allow discontinuity of $k$ at a finite number of points of the domain. For more details on the nature of discontinuity of our kernel, see the assumption \ref{discontinuous} below.  Roughly speaking, the discontinuity points account for a different dispersal behaviour; hence, a change in the patch where the population is moving. The discontinuity of $k$ adds more technicality  to the proofs of the main results than in \cite{Lewis}, especially those involving comparison arguments.

Our model \eqref{IDE} is a discrete time analogue of  the (continuous) time-space reaction-diffusion model 
	\begin{equation}\label{RD}
	\frac{\partial u}{\partial t}(t,x)=\frac{\partial^2 u}{\partial x^2}+f(u).
	\end{equation}
In \eqref{RD}, $t$ stands for the continuous time variable and $x$ stands for the continuous space variable. A higher dimensional version of \eqref{RD}, which also accounts for an underlying advection and heterogeneity in the landscape, is the semilinear parabolic partial differential equation
 \begin{equation}\label{RDA}
	\frac{\partial u}{\partial t}(t,x)=\Delta u+q(x)\cdot \nabla u+f(u), \quad t>0,\quad x\in \mathbb{R}^N,
	\end{equation}
for some incompressible vector field $q:\mathbb{R}^N\rightarrow \mathbb{R}^N.$
When \eqref{RDA} is used to describe the evolution of a population density $u(t,x),$ it is assumed that the dispersal of the species follows the normal distribution with zero mean, which however is not the case for most species. Equations of the type \eqref{RDA} have been studied in much detail. Traveling fronts, or pulsating traveling fronts, form a  particular class of solutions to such equations. The existence and qualitative properties of these travelling fronts and their propagation speeds have been studied in a long list of works (see \cite{ehom},   \cite{ek3}, \cite{ek1}, \cite{ek2} and the references therein). 

Equation \eqref{IDE} represents an alternative discrete-time model often used in biological literature. An advantage of the IDE \eqref{IDE} over the reaction-diffusion equation \eqref{RD} lies in the fact that \eqref{IDE} can be used to model the spatial spread of long-distance dispersers, while \eqref{RD} is inappropriate in this situation. 
	
	In this paper, we study the convergence of the integro-difference equation \eqref{IDE}, where $\Omega=(-a,a).$ We make the following assumptions:
	\begin{enumerate}[{(H1)}]
		\item\label{discontinuous} The function $k\ge0$ is a bounded nonnegative function on $\Omega\times \Omega,$ such that
		\begin{equation}\label{k.bounded}\text{for all }(x,y)\in\Omega^2, ~\delta< k(x,y)\leq \Lambda,\end{equation} for some $\delta,~\Lambda>0.$
		 Moreover, we assume that there exists a finite set  of points $\{a_i\}_{1\le i\le n}\subset\Omega,$ dividing the domain $\Omega$ into intervals \[\Omega_i=(a_i,a_{i+1})\text{ for  }1\le i\le n-1,~~\Omega_0=(-a,a_1)\text{ and }\Omega_n=(a_n,a)\]
		 and that $k$ is continuous on each $\Omega_i\times \Omega_j$ for $1\leq i,j\leq n.$
		\item\label{F_non_negative}  The nonlinearity $F$ satisfies: $F$ is continuous,  $0\leq F(x)\leq M$ for all $x\in \mathbb{R}.$ 
		%%%That is,  \[\left|F(x)-F(y)\right|\leq b|x-y|~\text{ for all }x,y\in\Omega.\]
		\item\label{on.F} The function $F$ is strictly increasing on $[0,\infty)$ and it vanishes elsewhere. We assume that $F$ differentiable at $0$ and we set \begin{equation}\label{r0}
		r_0:=F^\prime (0)>1.
		\end{equation} 
		Furthermore, the function $F$ satisfies \begin{equation}\label{F.cond}\frac{F(u)}{u}<\frac{F(v)}{v},~  \text{ for all } u>v>0.\end{equation}
	\end{enumerate}
In this work, we will study the convergence of the sequence $u_{n+1}=\mathcal{T}(u_n),$ where $\mathcal{T}$ is defined by 
	\begin{equation}\label{define.T}\mathcal{T}(u)(x)=\int_\Omega k(x,y)F(u(y))dy,~ \text{ for }~u\in L^2 (\Omega).\end{equation}

	\begin{definition}[Stationary solution]
		A measurable function $w$ : $\Omega\to\mathbb{R}$ is called a stationary solution of \eqref{IDE} if it satisfies $w=\mathcal{T}(w)$.
	\end{definition}
	\begin{definition}\label{T0}
		We denote the linearization of  the nonlinear operator $\mathcal{T},$ at $u=0,$ by $$\mathcal{T}_0(u)(x):=r_0\int_\Omega k(x,y)u(y)dy, ~x\in\Omega.$$
We denote by  \begin{equation}\label{set.X}
		X=\left\{u\in L^2(\Omega):\,\text{$u$ is continuous on $\Omega$ except at finitely many points}\right\}.\end{equation}
	\end{definition}

	We will make use of Krein-Rutman theorem in analyzing the linearized operator $\mathcal{T}_0,$ introduced in Definition \ref{T0}. We recall the statement of this theorem, below. The  proof of this  result can be found in \cite{Deimling}.  
	\begin{atheo}[Krein-Rutman Theorem, \cite{Deimling} Theorem 19.2]\label{krein} Let $E$ be a Banach space and let $K\subset E$ be a closed convex cone such that $K-K$ is dense in $E.$ Let $\mathcal{A}:E\to E$ be a non-zero compact linear operator which is positive, meaning that $\mathcal{A}(K)\subset K,$ with positive spectral radius $r(\mathcal{A}).$ Then, there exists a principal eigenvalue $\lambda_0=r(\mathcal{A})>0$ of $\mathcal{A}$, and an eigenfunction $u\in K\setminus \{0\}$ of $\mathcal{A}$ such that $\mathcal{A}u=\lambda_0 u$.
		
	\end{atheo}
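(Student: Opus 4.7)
The plan is to prove Krein--Rutman by the resolvent/spectral method, which is the most natural route here because we are already given that $r(\mathcal{A}) > 0$. The idea is to construct the eigenvector as a normalized limit of vectors obtained by applying the resolvent to a fixed element of the cone, and to exploit compactness of $\mathcal{A}$ to force the limit to be nonzero.

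First I would set up the resolvent in a suitable neighborhood of $r(\mathcal{A})$. For every real $\lambda > r(\mathcal{A})$, the operator $\lambda I - \mathcal{A}$ is invertible and
\[
R(\lambda) = (\lambda I - \mathcal{A})^{-1} = \sum_{n=0}^\infty \lambda^{-n-1}\mathcal{A}^n.
\]
Because $\mathcal{A}(K)\subset K$, each partial sum maps $K$ into $K$, and since $K$ is closed and convex, $R(\lambda)(K)\subset K$. Fix now any $v\in K\setminus\{0\}$ and set $u_\lambda := R(\lambda)v\in K$.

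Next I would establish that $\|u_\lambda\|\to\infty$ as $\lambda\downarrow r(\mathcal{A})$. Since $\mathcal{A}$ is compact and $r(\mathcal{A})>0$, Riesz--Schauder theory guarantees $r(\mathcal{A})\in\sigma(\mathcal{A})$; if $\|u_\lambda\|$ remained bounded, a positivity/comparison argument using the Neumann series (and the hypothesis that $K-K$ is dense in $E$, which lets one test against functionals that separate points of $K$) would contradict this. Once blow-up is in hand, I would normalize $w_\lambda := u_\lambda/\|u_\lambda\|\in K$, rewrite the defining relation as
\[
\mathcal{A} w_\lambda \;=\; \lambda\, w_\lambda - \frac{v}{\|u_\lambda\|},
\]
and observe that the second term on the right tends to $0$ in norm. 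By compactness of $\mathcal{A}$, a subsequence of $\mathcal{A} w_{\lambda_k}$ converges; dividing by $\lambda_k\to r(\mathcal{A})>0$ then shows $w_{\lambda_k}$ itself converges to some $w\in E$ with $\|w\|=1$. Closedness of $K$ gives $w\in K\setminus\{0\}$, and passing to the limit yields $\mathcal{A} w = r(\mathcal{A})\, w$, so $\lambda_0 := r(\mathcal{A})$ is the desired eigenvalue with eigenvector $w$ in the cone.

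The main obstacle is the blow-up claim $\|u_\lambda\|\to\infty$. It is not automatic that a spectral point forces the resolvent norm to explode along one-sided limits of real $\lambda$; one really needs to use positivity here. The density hypothesis $\overline{K-K}=E$ is what allows the standard trick of pairing $u_\lambda$ against a functional $\phi$ with $\phi(v)>0$ to derive a lower bound of the form $\lambda^n/(\text{something} \to 0)$, showing divergence. Everything else --- the Neumann expansion, the normalization, and the compactness extraction --- is routine once this key estimate is available.
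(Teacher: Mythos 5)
First, a point of comparison: the paper does not prove this statement at all --- it is quoted as Theorem 19.2 of Deimling and used as a black box, with the reader referred there for the proof. So there is no in-paper argument to measure your proposal against. Judged on its own terms, your resolvent outline is the standard textbook route, and its second half is correct and routine: the Neumann series shows $R(\lambda)(K)\subset K$, the identity $\mathcal{A}w_\lambda=\lambda w_\lambda-v/\|u_\lambda\|$ is right, and the compactness extraction together with closedness of $K$ does deliver an eigenvector in $K\setminus\{0\}$ for the eigenvalue $r(\mathcal{A})$ once the blow-up of $\|u_\lambda\|$ is known.

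The gap is exactly where you suspect it is, but it is more serious than your write-up concedes. Riesz--Schauder theory does \emph{not} guarantee that $r(\mathcal{A})\in\sigma(\mathcal{A})$; for a compact operator it only guarantees an eigenvalue $\mu$ with $|\mu|=r(\mathcal{A})$, and $\mu$ may be negative or complex (consider a rotation composed with a rank-one map). Proving that the positive real number $r(\mathcal{A})$ itself lies in the spectrum is the actual content of the Krein--Rutman theorem, and it is precisely here that positivity and the density of $K-K$ must enter --- typically via a Pringsheim-type argument: for $v\in K$ and a positive linear functional $\phi$ (produced by Hahn--Banach from the generating cone), the scalar series $\sum_{n\ge0}\phi(\mathcal{A}^nv)\,z^{-n-1}$ has nonnegative coefficients, hence has a singularity at the real point $z=r$ of its circle of convergence, and this is transferred back to the operator-valued resolvent to conclude $r(\mathcal{A})\in\sigma(\mathcal{A})$. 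Your proposal gestures at ``a positivity/comparison argument'' but does not supply it, and the one concrete claim you do make (attributing $r(\mathcal{A})\in\sigma(\mathcal{A})$ to Riesz--Schauder) is false, so the central step of the theorem is missing. A second, smaller gap: even granting $r(\mathcal{A})\in\sigma(\mathcal{A})$, which yields $\|R(\lambda)\|\ge(\lambda-r(\mathcal{A}))^{-1}\to\infty$, this does not imply $\|R(\lambda)v\|\to\infty$ for the single $v$ you fixed at the outset. You need Banach--Steinhaus to produce some element (and a sequence $\lambda_k\downarrow r(\mathcal{A})$) along which the resolvent is unbounded, and then the density of $K-K$ to replace that element by one lying in the cone. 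With those two repairs the argument closes; without them it does not.
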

	
	\noindent We will apply  Theorem \ref{krein} to the operator $\mathcal{T}_0,$ where we let  the Banach space $E$ be $L^2(\Omega)$ and take $$K=L^2(\Omega)^+,$$ where \[L^2(\Omega)^+:=\{u\in L^2(\Omega), \text{ such that }u\ge0\text{ in }\Omega\}.\] 
	
	\begin{prop}\label{principal_eigen}  The operator $\mathcal{T}_0$ maps $L^2(\Omega)$ into itself. Moreover, $\mathcal{T}_0$ is a  positive  compact operator and there exists an eigenfunction $\phi_0\in K\setminus\{0\}$   of $\mathcal{T}_0$  that corresponds  to the principal eigenvalue $\lambda_0:=r(\mathcal{T}_0).$ 
	\end{prop}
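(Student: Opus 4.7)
The plan is to verify the three requirements of Theorem \ref{krein} (mapping property / compactness, positivity, positive spectral radius) for $\mathcal{T}_0$ on $E = L^2(\Omega)$ with cone $K = L^2(\Omega)^+$, and then simply invoke the theorem. Since $K-K = L^2(\Omega)$ via the decomposition $u = u^+ - u^-$, the density hypothesis is automatic, so I would only mention it in passing.

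First I would show that $\mathcal{T}_0$ is a bounded linear operator on $L^2(\Omega)$. By \ref{discontinuous}, $k$ is bounded on the bounded set $\Omega\times\Omega$, so $k\in L^2(\Omega\times\Omega)$; a direct application of Cauchy--Schwarz in the integrand gives
\begin{equation*}
\|\mathcal{T}_0(u)\|_{L^2(\Omega)}^2 \le r_0^2\,\|k\|_{L^2(\Omega\times\Omega)}^2\,\|u\|_{L^2(\Omega)}^2,
\end{equation*}
so $\mathcal{T}_0: L^2(\Omega)\to L^2(\Omega)$. The same estimate identifies $\mathcal{T}_0$ as (a scalar multiple of) a Hilbert--Schmidt integral operator with kernel $r_0 k(x,y)\in L^2(\Omega\times\Omega)$, and Hilbert--Schmidt operators on a separable Hilbert space are compact. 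This gives compactness without needing an Arzel\`a--Ascoli argument, which would be awkward because $k$ is only piecewise continuous in the sense of \ref{discontinuous}. Positivity is immediate: if $u\in K$, then $r_0 k(x,y)u(y)\ge 0$ a.e., so $\mathcal{T}_0(u)\in K$.

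The main step, and the only one that uses the strict positivity $k\ge\delta$ from \eqref{k.bounded}, is showing $r(\mathcal{T}_0)>0$ so that $\mathcal{T}_0$ is not quasi-nilpotent. I would take the constant function $\mathbf{1}\in L^2(\Omega)$ and prove by induction that
\begin{equation*}
\mathcal{T}_0^n(\mathbf{1})(x)\ \ge\ \bigl(r_0\,\delta\,|\Omega|\bigr)^n \quad\text{for all } x\in\Omega,\ n\ge 1,
\end{equation*}
using at each step the pointwise bound $\mathcal{T}_0(v)(x)\ge r_0\delta\int_\Omega v(y)\,dy$ valid for every nonnegative $v$. Taking $L^2$-norms yields $\|\mathcal{T}_0^n\|\ge (r_0\delta|\Omega|)^n$, whence by Gelfand's formula
\begin{equation*}
r(\mathcal{T}_0)\ =\ \lim_{n\to\infty}\|\mathcal{T}_0^n\|^{1/n}\ \ge\ r_0\,\delta\,|\Omega|\ >\ 0.
\end{equation*}
In particular $\mathcal{T}_0\ne 0$.

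With the three hypotheses verified, Theorem \ref{krein} produces $\phi_0\in K\setminus\{0\}$ with $\mathcal{T}_0\phi_0=\lambda_0\phi_0$ and $\lambda_0=r(\mathcal{T}_0)>0$, which is exactly the conclusion. The only real subtlety is not trying to run a continuity/Arzel\`a--Ascoli argument through the discontinuities of $k$; routing compactness through Hilbert--Schmidt sidesteps the issue entirely and fits the $L^2$ setting chosen for $E$.
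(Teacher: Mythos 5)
Your proposal is correct and follows essentially the same route as the paper: Cauchy--Schwarz for boundedness, compactness via the Hilbert--Schmidt property of the $L^2$ kernel, positivity from $k\ge 0$, and the lower bound $r(\mathcal{T}_0)\ge r_0\delta|\Omega|>0$ obtained by iterating $\mathcal{T}_0$ on a constant function before invoking the Krein--Rutman theorem. The only cosmetic difference is that the paper tests against the normalized constant $v=1/\sqrt{|\Omega|}$ rather than $\mathbf{1}$, which changes nothing.
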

	\begin{proof} We know that $K$ is a cone  and the set $K-K$ is dense in $L^2(\Omega).$ The map $\mathcal{T}_0$ maps  $L^2(\Omega)$ into itself because
		\[\begin{array}{ll}\|\mathcal{T}_0(u)\|_2^2&=r_0^2\int_\Omega \left(\int_\Omega k(x,y)u(y)dy\right)^2{\rm d}x\vspace{5 pt}\\
			&\leq r_0^2 \int_\Omega\|k(x,\cdot)\|_2^2\|u\|_2^2\,{\rm d}x\vspace{5 pt}\\&
			=r_0^2\|k\|_2^2\|u\|_2^2<\infty.
		\end{array}\]
		Since $k\in L^2(\Omega^2),$ it then follows (see Lax \cite{Lax}, for eg.)  that $\mathcal{T}_0$ is a  compact linear operator. Moreover,  if $u\in K$ and  $x\in \Omega$, then $$\mathcal{T}_0(u)(x)= r_0\int_\Omega k(x,y)u(y)\,dy\ge r_0\delta\int_{-a}^au(y)\,dy\ge0$$ as $u\ge0$ and $u\not\equiv 0$. Hence, $\mathcal{T}_0(u)\ge0$ and $\mathcal{T}_0(u)\in K $. Then, $\mathcal{T}_0$ is positive.
		
		Since $\mathcal{T}_0$ is a bounded linear operator, the spectral radius of $\mathcal{T}$ is (see \cite{Lax})
		
		\[\lim_{n\to\infty}\|\mathcal{T}_0^n\|^\frac{1}{n}=r(\mathcal{T}_0)\]
		For $v(x)=\frac{1}{\sqrt{|\Omega|}},$ we have $\|v\|_2=1.$ Using $v$, we obtain  
		\[\begin{array}{c}
		  \|\mathcal{T}_0^n\|=\sup_{\|u\|_2\le1}\| \mathcal{T}_0^n(u) \|_2 \ge \|\mathcal{T}_0^n(v) \|_2\vspace{3mm}\\
		   \ge \left(\int_\Omega \left(r_0\int_\Omega k(x,y_1)\mathcal{T}_0^{n-1}(v)(y_1){\rm d}y_1\right)^2{\rm d}x\right)^\frac{1}{2}\vspace{3mm}\\
		      =\left(\int_\Omega \left(r_0^n\int_\Omega k(x,y_1)\int_\Omega k(y_1,y_2)\int_\Omega\cdots\int_\Omega k(y_{n-1},y_n) v(y_n){\rm d}y_n\cdots{\rm d}y_1\right)^2{\rm d}x\right)^\frac{1}{2}\vspace{3mm}\\
		     \ge r_0^{n}\left(\delta^{2n}\int_\Omega \left(\int_\Omega \cdots\ \int_\Omega \frac{1}{\sqrt{|\Omega|}}{\rm d}y_n\cdots{\rm d}y_1\right)^2{\rm d}x\right)^\frac{1}{2}\vspace{3mm}\\
		     =r_0^n\delta^n\frac{|\Omega|}{|\Omega|}^{n+1}=r_0^n\delta^n|\Omega|^n.
		\end{array}\]
		Taking the $n$th root both sides and the limit as $n$ goes to $\infty$ we get that 
		\begin{equation}\label{lower.estimate}
		r(\mathcal{T}_0)\ge r_0\delta|\Omega|>0.
		\end{equation}
	This proves that the spectral radius is positive ( i.e. $r(\mathcal{T}_0)>0$). 
		We can then apply Theorem \ref{krein} to obtain the existence of an eigenfunction 
		$\phi_0\in K$   of $\mathcal{T}_0$ that corresponds  to the principal eigenvalue $\lambda_0:=r(\mathcal{T}_0).$ 
	\end{proof}

	Our first two results answer the question about the stationary state of \eqref{IDE} in the case where $F(0)=0.$ The main criterion used to decide the nature of the stationary solution is the value of the principal eigenvalue $\lambda_0.$ 
	\begin{theorem}\label{convergence_to_equil}
		Let $\{u_n\}_n$ be a solution of \eqref{IDE}, with the initial condition $u_0\in X$. Suppose that $F$ and $k$ satisfy the assumptions \ref{discontinuous},  \ref{F_non_negative} and \ref{on.F}. Assume furthermore that $F(0)=0.$ If $\lambda_0\le1$ , then
		
		\begin{enumerate}
			\item $0$ is the only stationary solution of \eqref{IDE} in $X$.
			\item  The sequence $\{u_n\}_n$ converges to $0$ in $L^2(\Omega),$ as $n\to\infty$.
			
		\end{enumerate}
	\end{theorem}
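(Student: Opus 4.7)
The main device in both parts of the proof will be the adjoint $\mathcal{T}_0^\ast$ of the linearization $\mathcal{T}_0$, given by $\mathcal{T}_0^\ast(\psi)(y)=r_0\int_\Omega k(x,y)\psi(x)\,dx$. Because $\mathcal{T}_0^\ast$ inherits the same $L^2$-kernel property, the same $\delta$-lower bound and the same spectral radius $\lambda_0$ as $\mathcal{T}_0$, the Krein--Rutman argument of Proposition~\ref{principal_eigen} applies verbatim to produce a non-negative eigenfunction $\psi_0\in L^2(\Omega)\setminus\{0\}$ with $\mathcal{T}_0^\ast\psi_0=\lambda_0\psi_0$; moreover the pointwise bound $\psi_0(y)\ge \lambda_0^{-1}r_0\delta\int_\Omega\psi_0$ shows that $\psi_0$ is bounded below by a positive constant a.e.\ on $\Omega$. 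A second, purely pointwise, fact I would record is the strict inequality $F(s)<r_0 s$ for every $s>0$: by (H3), $s\mapsto F(s)/s$ is strictly decreasing on $(0,\infty)$, and the differentiability of $F$ at $0$ with $F(0)=0$ forces its limit at $0^+$ to be $r_0$.

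\textbf{Claim (1).} Let $w\in X$ satisfy $w=\mathcal{T}(w)$. Since $k\ge 0$ and $F\ge 0$, $w\ge 0$ a.e.; if $w\not\equiv 0$, the bound $k\ge\delta$ gives $w(x)\ge \delta\int_\Omega F(w)\,dy>0$ a.e. Testing $w=\mathcal{T}(w)$ against $r_0\psi_0$ and applying Fubini yields
\[
r_0\int_\Omega w\,\psi_0\,dx=\int_\Omega F(w)(\mathcal{T}_0^\ast\psi_0)\,dy=\lambda_0\int_\Omega F(w)\psi_0\,dy.
\]
Since $w>0$ and $\psi_0>0$ a.e., the strict inequality $F(w)<r_0 w$ integrates to $\int_\Omega F(w)\psi_0<r_0\int_\Omega w\psi_0$, and combining this with the display forces $\lambda_0>1$, contradicting $\lambda_0\le 1$; hence $w\equiv 0$.

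\textbf{Claim (2).} I would use $L_n:=\int_\Omega u_n\psi_0\,dx$ as a Lyapunov functional. For $n\ge 1$ we have $u_n\ge 0$ and, by (H2), $\|u_n\|_\infty\le M\Lambda|\Omega|$. The same Fubini calculation together with $F(u_n)\le r_0 u_n$ yields $L_{n+1}\le\lambda_0 L_n\le L_n$, so $L_n$ decreases to some $L_\infty\ge 0$. To show $L_\infty=0$, I would exploit that $\mathcal{T}$ is continuous and compact on bounded subsets of $L^2(\Omega)$: since $k\in L^2(\Omega^2)$ and $F$ is bounded, dominated convergence gives continuity while the Hilbert--Schmidt property of the integral operator gives compactness. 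Extracting a subsequence $u_{n_k}\to v$ in $L^2(\Omega)$, one has $u_{n_k+1}=\mathcal{T}(u_{n_k})\to\mathcal{T}(v)$; passing to the limit in the Lyapunov identity gives
\[
\int_\Omega v\psi_0\,dx=\frac{\lambda_0}{r_0}\int_\Omega F(v)\psi_0\,dy.
\]
If $\lambda_0<1$, combining with $F(v)\le r_0 v$ forces $\int v\psi_0=0$, hence $v\equiv 0$; if $\lambda_0=1$, the identity reads $\int(r_0 v-F(v))\psi_0=0$ and the strict inequality $F(v)<r_0 v$ on $\{v>0\}$, coupled with $\psi_0>0$, again forces $v\equiv 0$. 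Therefore $L_\infty=\int v\psi_0=0$, so $\int_\Omega u_n\,dx\to 0$, and the uniform $L^\infty$ bound upgrades this to $\|u_n\|_2^2\le\|u_n\|_\infty\int_\Omega u_n\,dx\to 0$.

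\textbf{Main obstacle.} The algebraic manipulations are routine; the genuinely delicate step is the pass-to-limit argument underpinning claim (2). It rests on two technical points specific to the present setting with discontinuous $k$: the existence and strict positivity of the adjoint eigenfunction $\psi_0$ (which requires re-running Proposition~\ref{principal_eigen} for $\mathcal{T}_0^\ast$ while tracking the role of the lower bound $k\ge\delta$), and the compactness of the nonlinear map $\mathcal{T}$ on bounded subsets of $L^2(\Omega)$, which has to be justified via the $L^2$-kernel property of $k$ and the boundedness of $F$ even though $k$ itself is only piecewise continuous.
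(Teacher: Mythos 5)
Your argument is correct, but it follows a genuinely different route from the paper's. For part (1), the paper never introduces the adjoint: it shows a positive stationary solution $w$ would satisfy $w<\mathcal{T}_0(w)$ pointwise, upgrades this to $w\le(1-\mu)\mathcal{T}_0(w)$ for some $\mu\in(0,1)$ via a somewhat delicate argument about $\sup_\Omega w/\mathcal{T}_0(w)$, and then iterates and compares with the (normalized) eigenfunction $\phi_0$ to get $w\le(1-\mu)^n\lambda_0^n\phi_0\to0$. Your duality computation $r_0\int_\Omega w\psi_0=\lambda_0\int_\Omega F(w)\psi_0$ combined with the strict pointwise bound $F(s)<r_0s$ reaches the same contradiction in one line and sidesteps the sup-of-ratios step entirely; the only price is having to rerun Krein--Rutman for $\mathcal{T}_0^\ast$ and check $r(\mathcal{T}_0^\ast)=r(\mathcal{T}_0)=\lambda_0$ and $\psi_0\ge c>0$, all of which you do correctly. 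For part (2), the paper avoids compactness altogether: it takes a constant supersolution $N>2a\Lambda M$, uses Lemma \ref{monotone} to get $\mathcal{T}^{\,n}(N)\downarrow w^*$ with $w^*$ stationary (hence $w^*=0$ by part (1)), squeezes $0\le u_n\le\mathcal{T}^{\,n-1}(N)$ by induction, and concludes via Lemma \ref{conv.in.L2}. Your Lyapunov functional $L_n=\int u_n\psi_0$ instead gives the quantitative decay $L_{n+1}\le\lambda_0L_n$ (immediately conclusive when $\lambda_0<1$) and handles the borderline case $\lambda_0=1$ through the Hilbert--Schmidt compactness of the linear part plus continuity of $\mathcal{T}$ on $L^2$ (which holds by convergence in measure and the boundedness of $F$, as you indicate). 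The paper's route is more elementary and reuses the monotone machinery it needs for Theorems \ref{Nonzero_Equilibrium} and \ref{case_F(0)_>0} anyway; yours is shorter on the uniqueness side, does not need the order-theoretic lemmas at all, and works for any measurable stationary solution, not just those in $X$.
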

	
	Theorem \ref{convergence_to_equil} shows that the population will eventually undergo extinction, whenever the principal eigenvalue $\lambda_0$ is less or equal 1. The next theorem shows that the threshold $\lambda_0=1$ is sharp in the sense that  the population settles at a positive stationary state whenever $\lambda_0>1.$

	As a  consequence of Theorem \ref{convergence_to_equil}, we can now consider the case with mortality. In general, mortality of individuals is included in the dispersal \cite{Lutscher.patchy}. This can be reflected on the kernel by assuming that 
\begin{equation}\label{mortality}
    \int_\Omega k(x,y)\,\mathrm{d}y\le1 \text{ for all $x\in \Omega.$}
\end{equation}
Under the assumption \eqref{mortality}, the following result holds:
\begin{corollary}\label{r0Theorem}
	Let $\{u_n\}_n$ be a solution of \eqref{IDE}, with the initial condition $u_0\in X$. Suppose that $F$ and $k$ satisfy the assumptions \ref{discontinuous},  \ref{F_non_negative}, \ref{on.F} and the additional assumption \eqref{mortality}. Assume furthermore that $F(0)=0$ and $r_0\le1.$ Then,
		\begin{enumerate}
			\item $0$ is the only stationary solution of \eqref{IDE} in $X$.
			\item  The sequence $\{u_n\}_n$ converges to $0$ in $L^2(\Omega),$ as $n\to\infty$.
			\end{enumerate}
\end{corollary}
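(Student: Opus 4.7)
The plan is to derive Corollary \ref{r0Theorem} as a direct consequence of Theorem \ref{convergence_to_equil}; specifically, I would show that the extra hypotheses \eqref{mortality} and $r_0\le 1$ together force the principal eigenvalue $\lambda_0$ of $\mathcal{T}_0$ to satisfy $\lambda_0\le 1$, after which both conclusions of the corollary follow immediately from the theorem.

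To bound $\lambda_0$, I would work with the eigenpair $(\lambda_0,\phi_0)$ produced by Proposition \ref{principal_eigen}, where $\phi_0\in K\setminus\{0\}$ and $\lambda_0\phi_0=\mathcal{T}_0\phi_0$. The first sub-step is to observe that $\phi_0$ is essentially bounded on $\Omega$: indeed, since $k$ is bounded by $\Lambda$ on $\Omega^2$ and $\phi_0\in L^2(\Omega)$, Cauchy--Schwarz yields
\[
\phi_0(x)=\frac{r_0}{\lambda_0}\int_\Omega k(x,y)\phi_0(y)\,\mathrm{d}y\le \frac{r_0\Lambda}{\lambda_0}|\Omega|^{1/2}\|\phi_0\|_2<\infty
\]
for a.e.\ $x\in\Omega$, so $M:=\operatorname*{ess\,sup}_\Omega\phi_0\in(0,\infty)$.

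Next, using the mortality condition \eqref{mortality}, I would estimate
\[
\phi_0(x)=\frac{r_0}{\lambda_0}\int_\Omega k(x,y)\phi_0(y)\,\mathrm{d}y\le \frac{r_0}{\lambda_0}M\int_\Omega k(x,y)\,\mathrm{d}y\le \frac{r_0}{\lambda_0}M
\]
for a.e.\ $x$. Taking the essential supremum in $x$ and dividing by $M>0$ gives $\lambda_0\le r_0\le 1$. With this inequality in hand, the assumption $\lambda_0\le 1$ of Theorem \ref{convergence_to_equil} is satisfied, so the theorem delivers both claims $(1)$ and $(2)$ of the corollary.

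The only mildly delicate point, and what I would treat as the main obstacle, is the essential boundedness of $\phi_0$; without it, taking a pointwise supremum is not justified and one would be forced into a weaker duality argument (e.g.\ integrating against the dual eigenfunction or using an adjoint formulation). The $L^\infty$ estimate above, which is a consequence of the fact that the range of $\mathcal{T}_0$ lies in $L^\infty(\Omega)$ since $k$ is bounded and $\phi_0\in L^2(\Omega)$, sidesteps this issue cleanly. Everything else is bookkeeping on top of Theorem \ref{convergence_to_equil}.
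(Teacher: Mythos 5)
Your proposal is correct and follows essentially the same route as the paper: both reduce the corollary to verifying $\lambda_0\le 1$ for Theorem \ref{convergence_to_equil}, establish $\phi_0\in L^\infty(\Omega)$ via Cauchy--Schwarz and the boundedness of $k$, and then use the mortality condition \eqref{mortality} to get $\lambda_0\le r_0\le 1$ by taking a supremum of the eigenvalue relation. The only cosmetic difference is that you divide by $\operatorname*{ess\,sup}\phi_0$ where the paper normalizes $\|\phi_0\|_{L^\infty}=1$.
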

Our third result in the case where $F(0)=0$ is the following:	
	\begin{theorem}\label{Nonzero_Equilibrium}  Suppose that $F$ and $k$ satisfy the assumptions \ref{discontinuous}, \ref{F_non_negative} and  \ref{on.F}. Assume furthermore that $F(0)=0.$ Suppose that $u_0\in X,$  $u_0\geq0,$ $u_0\not\equiv 0$ and that $\lambda_0>1.$ Then,
		\begin{enumerate}
			\item There exists a unique positive stationary solution $w$ of \eqref{IDE}.
			\item The sequence $\{u_n\}_n$ converges to $w$ in $L^2(\Omega),$ as $n\to\infty$.
		\end{enumerate}
	\end{theorem}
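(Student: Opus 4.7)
The plan is to establish existence of a positive stationary solution $w$ by monotone iteration between a subsolution and a supersolution, to obtain uniqueness from the strict sub-homogeneity encoded in \eqref{F.cond}, and to conclude convergence by sandwiching $u_n$ between two monotone sequences.

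For existence, I observe that $C:=M\Lambda|\Omega|$ is a supersolution since $\mathcal{T}(C)\leq C$, and I use $\epsilon\phi_0$ (for small $\epsilon>0$) as a subsolution, where $\phi_0$ is the principal eigenfunction from Proposition~\ref{principal_eigen}. The identity $\phi_0(x)=(r_0/\lambda_0)\int_\Omega k(x,y)\phi_0(y)\,\mathrm{d}y$ together with $\delta\leq k\leq\Lambda$ shows that $\phi_0$ is uniformly bounded above and below by positive constants, so by \ref{on.F} the ratio $F(\epsilon\phi_0(y))/(\epsilon\phi_0(y))$ tends to $r_0$ uniformly in $y$ as $\epsilon\to 0^+$. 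Choosing $\kappa\in(0,\lambda_0-1)$ and then $\epsilon$ small enough gives $F(\epsilon\phi_0(y))\geq r_0(1+\kappa)/\lambda_0\cdot\epsilon\phi_0(y)$ for all $y$, hence
\begin{equation*}
\mathcal{T}(\epsilon\phi_0)(x)\geq \frac{r_0(1+\kappa)}{\lambda_0}\int_\Omega k(x,y)\,\epsilon\phi_0(y)\,\mathrm{d}y=(1+\kappa)\,\epsilon\phi_0(x).
\end{equation*}
Since $\mathcal{T}$ is monotone ($F$ increasing on $[0,\infty)$, $k\geq 0$), the iterates $v_n:=\mathcal{T}^n(\epsilon\phi_0)$ are increasing, $V_n:=\mathcal{T}^n(C)$ are decreasing, and both lie in $[\epsilon\phi_0,C]$. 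Dominated convergence produces pointwise and $L^2$ limits, and $L^2$-continuity of $\mathcal{T}$ (following from $F$ being continuous and bounded and $k\in L^\infty(\Omega^2)$) lets me pass to the limit in $v_{n+1}=\mathcal{T}(v_n)$, yielding two positive stationary solutions.

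For uniqueness, let $w_1,w_2$ be positive stationary solutions. Each satisfies $w_i(x)\geq \delta\int_\Omega F(w_i(y))\,\mathrm{d}y>0$ and $w_i\leq C$, so both lie between positive constants. Set
\begin{equation*}
\beta:=\inf\{t>0:\,tw_1\geq w_2\},\qquad \alpha:=\sup\{t>0:\,tw_1\leq w_2\},
\end{equation*}
both finite and positive with $\alpha\leq\beta$. If $\beta>1$, \eqref{F.cond} gives the pointwise strict inequality $F(\beta w_1(y))<\beta F(w_1(y))$, and integrating against $k\geq\delta$ yields the uniform gap
\begin{equation*}
\beta w_1(x)-\mathcal{T}(\beta w_1)(x) \geq \delta\int_\Omega\bigl[\beta F(w_1)-F(\beta w_1)\bigr]\mathrm{d}y =: g_0 >0.
\end{equation*}
Combined with $w_2=\mathcal{T}(w_2)\leq\mathcal{T}(\beta w_1)$ and $w_1\leq C$, this gives $w_2\leq(\beta-g_0/C)w_1$, contradicting the definition of $\beta$. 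The case $\alpha<1$ is handled symmetrically, using the reverse direction in \eqref{F.cond} to produce a uniform gap $\mathcal{T}(\alpha w_1)-\alpha w_1 \geq g_1>0$. Hence $\alpha=\beta=1$ and $w_1=w_2=:w$.

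For convergence, note that $u_1=\mathcal{T}(u_0)\geq\delta\int_\Omega F(u_0(y))\,\mathrm{d}y>0$ is uniformly bounded below by a positive constant (using $u_0\not\equiv 0$, $u_0\geq 0$, and $F$ strictly positive on $(0,\infty)$), and $u_1\leq C$. Shrinking $\epsilon$ if necessary so that $\epsilon\phi_0\leq u_1$, monotonicity yields $v_n\leq u_{n+1}\leq V_n$ for every $n$, and since $v_n\nearrow w$ and $V_n\searrow w$ by uniqueness, dominated convergence delivers $u_n\to w$ in $L^2(\Omega)$. The main obstacle is the uniqueness step: the discontinuity of $k$ rules out the continuity/compactness shortcuts available in \cite{Lewis}, so one must upgrade the pointwise inequality $\mathcal{T}(\beta w_1)<\beta w_1$ to a uniform quantitative gap. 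The lower bound $k\geq\delta$ from \ref{discontinuous} is the essential replacement; it forces every nontrivial stationary solution to be uniformly bounded below by a positive constant, and this uniform positivity is exactly what turns the sub-homogeneity inequality into a genuine contradiction.
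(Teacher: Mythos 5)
Your proposal is correct, and its overall skeleton (monotone iteration upward from $\varepsilon\phi_0$ and downward from a large constant, then sandwiching $u_{n+1}$ between the two monotone sequences) is the same as the paper's. The genuine difference is in how uniqueness is obtained. The paper relies on its Lemma \ref{towards.uniqueness}: it decomposes $\Omega$ into the finitely many subintervals on which both solutions are continuous, takes the smallest multiplier $\alpha_{ij}$ on each piece, and derives a contradiction at a touching point $x_{ij}$ where $\alpha_0 u(x_{ij})=v(x_{ij})$ --- a step that leans on continuity to make the infimum (essentially) attained. You instead define $\beta=\inf\{t>0:\,tw_1\geq w_2\}$ globally and, rather than locating a touching point, convert the pointwise strict inequality $F(\beta w_1)<\beta F(w_1)$ from \eqref{F.cond} into a \emph{uniform} gap $g_0=\delta\int_\Omega[\beta F(w_1)-F(\beta w_1)]>0$ via the lower bound $k\geq\delta$ of \ref{discontinuous}, and then use $w_1\leq C$ to strictly decrease $\beta$. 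This is a cleaner argument in the discontinuous setting: it needs only measurability plus the uniform two-sided bounds $0<\delta\int_\Omega F(w_i)\leq w_i\leq C$, so it never touches the piecewise-continuity structure. A side benefit is that you can iterate directly from $\varepsilon\phi_0$, whereas the paper first approximates $\varepsilon\phi_0$ from below by simple functions $f_n\in X$ (using $\inf_\Omega(\mathcal{T}(\varepsilon\phi_0)-\varepsilon\phi_0)>0$) precisely so that its $X$-based machinery (Lemma \ref{T(X)}, Corollary \ref{cor}) applies; your uniqueness statement does not require the stationary solutions to lie in $X$, so that detour is unnecessary. The remaining ingredients --- uniform boundedness of $\phi_0$ from the eigenvalue identity, the uniform limit $F(\varepsilon\phi_0)/(\varepsilon\phi_0)\to r_0$, the choice $\kappa<\lambda_0-1$, and the positive uniform lower bound on $u_1$ coming from $u_0\not\equiv0$ --- all match the paper's computations.
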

	In the next theorem, we will see that the stationary state will be positive, regardless of $\lambda_0$, whenever $F(0)>0.$
		\begin{theorem}\label{case_F(0)_>0}
		Suppose that $F$ and $k$ satisfy the assumptions  \ref{discontinuous}, \ref{F_non_negative} and \ref{on.F}. Assume furthermore that $F(0)>0.$ If $u_0\in X$, $u_0\ge0$ and $u_0\not\equiv 0,$ then
		\begin{enumerate}		
			\item There exists a unique positive stationary solution $w$ of \eqref{IDE}. 
			\item  The sequence $\{u_n\}_n$ converges to $w$ in $L^2(\Omega),$ as $n\to\infty$.
		\end{enumerate}		 
	\end{theorem}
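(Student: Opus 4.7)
The plan is to construct the stationary state by iterating $\mathcal{T}$ from the two natural extremes and exploiting that $\mathcal{T}$ is order-preserving (since $F$ is increasing on $[0,\infty)$ and $k\ge 0$). Starting from $\underline{u}_0\equiv 0$, the hypothesis $F(0)>0$ produces a strictly positive first iterate
\[\underline{u}_1(x)=\mathcal{T}(0)(x)=F(0)\int_\Omega k(x,y)\,\mathrm{d}y\ge F(0)\,\delta\,|\Omega|>0,\]
so $\underline{u}_0\le\underline{u}_1$, and by induction $\{\underline{u}_n\}$ is nondecreasing. It is uniformly bounded by $M\Lambda|\Omega|$ (since $F\le M$, $k\le\Lambda$), so monotone/dominated convergence gives a pointwise and $L^2$ limit $\underline{w}$, which passes through $\mathcal{T}$ by dominated convergence to yield $\underline{w}=\mathcal{T}(\underline{w})$. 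An analogous argument starting from the constant $\overline{u}_0\equiv M\Lambda|\Omega|$ (which is a super-solution, $\mathcal{T}(\overline{u}_0)\le\overline{u}_0$) produces a nonincreasing sequence converging to a fixed point $\overline{w}$. Both $\underline{w}$ and $\overline{w}$ satisfy $w(x)\ge F(0)\delta|\Omega|>0$, so they are genuine positive stationary solutions.

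\textbf{Uniqueness.} Any positive fixed point $w$ of $\mathcal{T}$ is pinched by $F(0)\delta|\Omega|\le w(x)\le M\Lambda|\Omega|$. Given two such fixed points $w_1,w_2$, I would set
\[\beta^\star:=\sup\{\beta>0:\,w_2\ge\beta w_1\text{ on }\Omega\},\]
which the two-sided bounds force to lie in $(0,\infty)$. Suppose for contradiction that $\beta^\star<1$. The concavity-type assumption \eqref{F.cond} gives $F(\beta^\star s)>\beta^\star F(s)$ for all $s>0$, and the continuous function $s\mapsto F(\beta^\star s)-\beta^\star F(s)$ is strictly positive on the compact essential range of $w_1$, hence admits some positive infimum $\eta>0$. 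Using $k\ge\delta$ and $w_2=\mathcal{T}(w_2)\ge\mathcal{T}(\beta^\star w_1)$, one gets
\[w_2(x)\ge\beta^\star w_1(x)+\delta\,\eta\,|\Omega|\qquad\text{for all }x\in\Omega.\]
Combined with $w_1\le M\Lambda|\Omega|$, this upgrades to $w_2\ge(\beta^\star+\varepsilon)w_1$ for some $\varepsilon>0$, contradicting the maximality of $\beta^\star$. Hence $\beta^\star\ge 1$; symmetry then yields $w_1=w_2$. In particular $\underline{w}=\overline{w}=:w$ is the unique positive stationary state.

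\textbf{Convergence of the full orbit, and the main obstacle.} For an arbitrary $u_0\in X$ with $u_0\ge 0$ and $u_0\not\equiv 0$, the monotonicity of $\mathcal{T}$ gives $\underline{u}_n=\mathcal{T}^n(0)\le\mathcal{T}^n(u_0)=u_n$. After one step, $u_1=\mathcal{T}(u_0)\le M\Lambda|\Omega|$, so iterating once more yields $u_{n+1}\le\mathcal{T}^n(M\Lambda|\Omega|)=\overline{u}_n$. Thus $\underline{u}_n\le u_n\le\overline{u}_{n-1}$ for $n\ge 1$; the two sandwiching sequences converge to the same limit $w$, so $u_n\to w$ pointwise. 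Since $u_n$ is uniformly bounded by $M\Lambda|\Omega|$ on the finite-measure domain $\Omega$, dominated convergence upgrades this to convergence in $L^2(\Omega)$. The main technical obstacle in my view is the uniqueness step: one must convert the strict pointwise inequality coming from \eqref{F.cond} into a uniform multiplicative gap exceeding $\beta^\star$. This is precisely where the uniform lower bound $k\ge\delta$ supplied by \ref{discontinuous}, together with the $F(0)>0$ hypothesis forcing every positive fixed point uniformly away from zero, prove essential; the discontinuities of $k$ are harmless here because only integral lower bounds enter.
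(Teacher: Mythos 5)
Your proposal is correct, and its skeleton (monotone iteration up from $0$ and down from a large constant, then sandwiching the arbitrary orbit $u_n$ between $\mathcal{T}^{n}(0)$ and $\mathcal{T}^{n}(N)$, upgrading pointwise to $L^2$ convergence by dominated convergence on the bounded domain) is exactly the paper's. Where you genuinely diverge is the uniqueness step. The paper does not reprove uniqueness inside this theorem: it invokes its Corollary \ref{cor}, which rests on Lemma \ref{towards.uniqueness} --- a touching-point argument in which $\alpha_{ij}:=\inf\{\alpha>0:\alpha u\ge v \text{ on } A_{ij}\}$ is taken over the finitely many pieces where $u,v\in X$ are continuous, and a point $x_0$ with $\alpha_0 u(x_0)=v(x_0)$ is located to derive a contradiction from \eqref{F.cond}. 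Your argument instead sets $\beta^\star=\sup\{\beta>0: w_2\ge\beta w_1\}$ globally and, rather than locating a touching point, converts the strict inequality $F(\beta^\star s)>\beta^\star F(s)$ into a uniform additive gap $\eta>0$ over the compact essential range $[F(0)\delta|\Omega|,\,M\Lambda|\Omega|]$ of a fixed point, then uses $k\ge\delta$ to push the gap through the integral and contradict maximality of $\beta^\star$. This buys you two things: you never need the piecewise-continuity structure of $X$ or the attainment of the infimum (a point the paper's lemma treats somewhat loosely, since on an open piece the infimum may only be approached in the limit), and the argument is self-contained. The trade-off is that your version leans on the two-sided uniform bounds $F(0)\delta|\Omega|\le w\le M\Lambda|\Omega|$, which are available here only because $F(0)>0$; the paper's lemma is engineered to also cover the case $F(0)=0$ needed in Theorem \ref{Nonzero_Equilibrium}, so it cannot be replaced wholesale by your gap argument. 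For this particular theorem, your route is, if anything, cleaner.
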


	\section{Proofs of the main results}
	We start with a series of Lemmas that will be used in the proofs of the theorems that we announced in Section \ref{intro}, above. 
	\begin{lem}\label{equilibrium.is.positive} Let $w$ be a stationary solution of \eqref{IDE}. Then, $w\ge0$. Moreover, if $w\not\equiv0$, then $w(x)>0$ for all $x\in\Omega$.
	\end{lem}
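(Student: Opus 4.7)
My plan is to argue directly from the fixed-point identity $w(x)=\int_\Omega k(x,y)F(w(y))\,\mathrm{d}y$, using only the sign information supplied by assumptions \ref{discontinuous}--\ref{on.F}. The key inputs are the uniform lower bound $k(x,y)>\delta>0$ on all of $\Omega\times\Omega$ from \ref{discontinuous}, together with the nonnegativity $F\geq 0$ on $\mathbb{R}$ from \ref{F_non_negative} and the strict monotonicity of $F$ on $[0,\infty)$ from \ref{on.F}.

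For the first claim, I would simply observe that at every $(x,y)\in\Omega\times\Omega$ the integrand $k(x,y)F(w(y))$ is a product of two nonnegative quantities, regardless of the sign of $w(y)$. Integrating over $\Omega$ gives $w(x)=\mathcal{T}(w)(x)\geq 0$ for every $x\in\Omega$.

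For the second claim, suppose $w\not\equiv 0$. By the first claim $w\geq 0$, so the set $A:=\{y\in\Omega:\,w(y)>0\}$ has strictly positive Lebesgue measure. On $A$, strict monotonicity of $F$ on $[0,\infty)$ yields $F(w(y))>F(0)\geq 0$, hence $F(w(y))>0$ pointwise on $A$. Then for every $x\in\Omega$,
$$w(x)\;=\;\int_\Omega k(x,y)F(w(y))\,\mathrm{d}y\;\geq\;\delta\int_A F(w(y))\,\mathrm{d}y\;>\;0,$$
where I use the pointwise bound $k(x,y)>\delta$. This yields $w(x)>0$ on all of $\Omega$.

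There is no serious obstacle here: the uniform positivity $k\geq\delta$ built into \ref{discontinuous} is precisely what makes the conclusion immediate and removes any need for a minimum-principle or irreducibility argument. The discontinuities of $k$ at the interface points $\{a_i\}$ play no role in this lemma, since only the global lower bound on $k$ is used; similarly, whether or not $F(0)=0$ is irrelevant, because the strict inequality $F(w(y))>F(0)\geq 0$ on $A$ suffices in either case.
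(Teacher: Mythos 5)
Your overall strategy is the same as the paper's: use the pointwise lower bound $k>\delta$ from \ref{discontinuous} together with $F\ge0$ to get nonnegativity, and then bound $w(x)=\mathcal{T}(w)(x)$ from below by $\delta$ times an integral of $F(w)$ that you argue is positive. The first claim is fine. But there is one step in the second claim that you assert without justification, and it is exactly the point the paper spends the second half of its proof on: you write that since $w\not\equiv0$ and $w\ge0$, the set $A=\{y:w(y)>0\}$ has strictly positive Lebesgue measure. Nonnegativity plus ``not identically zero'' (read pointwise, as the paper reads it and as the pointwise conclusion $w(x)>0$ for \emph{all} $x$ suggests) only gives that $A$ is nonempty; a nonempty set can have measure zero, in which case $\int_A F(w)\,\mathrm{d}y=0$ and your final strict inequality collapses. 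The fact that $|A|>0$ is not a consequence of the sign conditions alone --- it has to be extracted from the equation $w=\mathcal{T}(w)$.

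The repair is short and is precisely the paper's argument in the case $F(0)=0$: if $w=0$ a.e.\ (equivalently $|A|=0$), then $F(w(\cdot))=F(0)=0$ a.e.\ (using that $F$ vanishes on $(-\infty,0]$ and is strictly increasing on $[0,\infty)$), so for the point $z$ with $w(z)>0$ one gets $0<w(z)=\int_\Omega k(z,y)F(w(y))\,\mathrm{d}y=0$, a contradiction; hence $|A|>0$ and your chain of inequalities goes through. (When $F(0)>0$ no such detour is needed, since $w(x)\ge 2a\delta F(0)>0$ directly; your remark that the value of $F(0)$ is irrelevant is therefore slightly misleading --- it is irrelevant to the final bound, but the justification of $|A|>0$ in the degenerate case is exactly where $F(0)=0$ matters.) With that one sentence added, your proof is correct and is essentially the paper's, written in a unified way rather than split into the two cases $F(0)>0$ and $F(0)=0$.
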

	\begin{proof}First we mention that, since $F$ is nonnegative and $k>0,$ then the function $w$ is nonnegative.  We prove the claim according to the value of $F(0).$ If $F(0)>0,$ then \[\forall x\in \Omega,\quad w(x)=\int_\Omega k(x,y)F(w(y))\,{\rm d}y>2a\delta F(0)>0.\]
If $F(0)=0,$ then for $x\in\Omega,$ using \eqref{k.bounded} we have 
\[w(x)=\int_\Omega k(x,y)F(w(y))\,{\rm d}y\geq \delta \int_\Omega F(w(y))\,{\rm d}y>0,\]
because if $\int_\Omega F(w(y))\,{\rm d}y=0,$ then $F(w(y))=0$ a.e in $\Omega.$  The latter implies that $w(y)=0$ a.e in $\Omega.$ However, as $w\not\equiv0$, there exists $z\in \Omega$ such that $w(z)>0.$ Then, $$0<w(z)=\int_\Omega k(z,y)F(w(y))\,dy=0$$ as $w(y)=0$ a.e in $\Omega$. This is a contradiction.
		\end{proof}

	\begin{lem}\label{towards.uniqueness} Let $u,v\in X$ (defined in \eqref{set.X}), such that $u\ge \mathcal{T}(u)$ and $v\le \mathcal{T}(v)$. If $u>0$, then $u\ge v$.
	\end{lem}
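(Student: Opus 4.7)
The strategy is a sliding argument that exploits the strict subhomogeneity of $F$ encoded in \eqref{F.cond}. As a preliminary reduction I replace $u$ by $\bar u := \mathcal{T}(u)$. The monotonicity of $\mathcal{T}$, inherited from that of $F$, together with $u\ge \mathcal{T}(u)$ yields $\bar u\ge \mathcal{T}(\bar u)$, while $F\le M$ and $k\le\Lambda$ force $\bar u\le M\Lambda|\Omega|$ pointwise. Moreover, since $u(x)>0$ for every $x\in\Omega$ and $F$ is strictly increasing with $F\ge 0$, one has $F\circ u>0$ pointwise on $\Omega$, and therefore $\bar u(x)\ge \delta\int_\Omega F(u(y))\,dy =: m >0$ uniformly in $x\in\Omega$. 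As $\bar u\le u$, it suffices to establish $\bar u\ge v$. I therefore replace $u$ by $\bar u$ and henceforth assume $m\le u\le M\Lambda|\Omega|$, $u\ge \mathcal{T}(u)$, and $v\le\mathcal{T}(v)$ (the latter implying $v\le M\Lambda|\Omega|$ via $F\le M$ and $k\le\Lambda$).

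Set $\gamma^* := \inf\{\gamma>0 : \gamma u(x)\ge v(x)\ \text{for all }x\in\Omega\}$. Since $u\ge m>0$ and $v$ is bounded above, $\gamma^*$ is finite, and passing to a limit along any $\gamma_n\searrow\gamma^*$ preserves the inequality and gives $\gamma^* u\ge v$ on $\Omega$. If $\gamma^*\le 1$, then $u\ge\gamma^* u\ge v$ and we are done; so assume for contradiction that $\gamma^*>1$.

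Combining $u\ge\mathcal{T}(u)$, $v\le\mathcal{T}(v)$, the monotonicity of $F$, and $v\le\gamma^* u$, we get
\[\gamma^* u(x)-v(x)\;\ge\;\gamma^*\mathcal{T}(u)(x)-\mathcal{T}(v)(x)\;\ge\;\int_\Omega k(x,y)\bigl[\gamma^* F(u(y))-F(\gamma^* u(y))\bigr]\,dy.\]
Applying \eqref{F.cond} to the pair $\gamma^* u(y)>u(y)>0$ gives $F(\gamma^* u(y))<\gamma^* F(u(y))$, so the measurable function $g(y):=\gamma^* F(u(y))-F(\gamma^* u(y))$ is strictly positive on all of $\Omega$; hence $\int_\Omega g\,dy>0$, and since $k\ge\delta$ we deduce the uniform estimate $\gamma^* u(x)-v(x)\ge \delta\int_\Omega g(y)\,dy =: C >0$ for every $x\in\Omega$. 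Setting $\tilde\gamma:=\gamma^*-C/\|u\|_\infty<\gamma^*$ yields $\tilde\gamma u(x)\ge \gamma^* u(x)-C\ge v(x)$ on $\Omega$, which contradicts the minimality of $\gamma^*$. Therefore $\gamma^*\le 1$ and $u\ge v$.

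The main obstacle is that $u\ge\mathcal{T}(u)$ does not a priori bound $u$ from above, so without the initial reduction the production of a strictly smaller admissible $\tilde\gamma$ breaks down. The replacement by $\bar u=\mathcal{T}(u)$ neutralizes this, since $F\le M$ and $k\le\Lambda$ automatically put $\mathcal{T}$ into a bounded set; the remainder is the standard sliding computation powered by the strict inequality $F(\lambda s)<\lambda F(s)$ for $\lambda>1$ and $s>0$.
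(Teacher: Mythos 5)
Your proof is correct, and while it shares the overall skeleton of the paper's argument (a sliding/scaling argument driven by the strict subhomogeneity \eqref{F.cond}, with the same preliminary facts $\inf_\Omega u>0$ and $v\le 2a\Lambda M$), the mechanism by which you reach the contradiction is genuinely different. The paper partitions $\Omega$ into the pieces $A_{ij}=B_i\cap C_j$ on which $u$ and $v$ are continuous, uses continuity to produce a touching point $x_0$ with $\alpha_0 u(x_0)=v(x_0)$, and contradicts that single pointwise equality via the strict inequality under the integral sign. You avoid the decomposition and the touching point altogether: you first replace $u$ by $\mathcal{T}(u)$ to secure an $L^\infty$ bound (a step the paper does not need but which your argument genuinely requires for the final perturbation $\tilde\gamma=\gamma^*-C/\|u\|_\infty$), define one global optimal constant $\gamma^*$, and then use $k\ge\delta$ to convert the strict pointwise positivity of $g(y)=\gamma^*F(u(y))-F(\gamma^*u(y))$ into a \emph{uniform} gap $\gamma^*u-v\ge C>0$, which lets you decrease $\gamma^*$ and contradict its minimality. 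What your route buys is that it never uses the piecewise-continuity structure of $X$ --- only measurability and the bounds --- and it sidesteps the slightly delicate existence of a touching point inside the open intervals $A_{ij}$; what the paper's route buys is that it works directly with $u$ without the reduction to $\mathcal{T}(u)$ and hence without needing $u\in L^\infty$. One cosmetic point you may wish to add: if $\tilde\gamma\le 0$ then $v\le\tilde\gamma u\le 0$, so every $\gamma>0$ lies in the defining set and $\gamma^*=0$, again contradicting $\gamma^*>1$; this closes the case you implicitly skip when you treat $\tilde\gamma$ as an admissible competitor.
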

	
	\begin{proof} 
		First, we show that $\inf_{x\in\Omega} u(x)>0.$ We will discuss this according to the value of $F(0).$  If $F(0)>0,$ then
		  we have \[\forall x\in\Omega,\quad u(x)\geq \mathcal{T}(u)(x)\ge2a\delta F(0)>0.\]  If $F(0)=0,$  then
		   \[\forall x\in\Omega,\quad u(x)\geq \mathcal{T}(u)(x)=\int_\Omega k(x,y)F(u(y))\,{\rm d}y \ge \delta\int_\Omega F(u(y))\,{\rm d}y>0,\]
		 Note that $\int_\Omega F(u(y))\,{\rm d}y>0,$ because we can find an interval in $\Omega$ where $u>0,$ which implies $F(u)>0$ on that interval. This proves  our claim. Also, we have \[v\le \mathcal{T}(v)=\int_\Omega k(\cdot,y)F(v(y))\,dy\le 2a \Lambda M.\]
		Since $u,v\in X$, we denote by $\{B_i\}_{1\le i\le p}$ and $\{C_j\}_{~ 1\le j\le q}$ the subintervals of $\Omega$ such that  $u$ and $v$ are continuous on each $B_i$ and $C_j$ respectively. Let $A_{ij}=B_i\cap C_j$ and define $$\alpha_{ij}:=\inf\left\{\alpha>0,~\alpha u(x)\geq v(x)\text{ for all }x\in A_{ij}\right\}.$$
		The infimum exists as the set $\left\{\alpha>0,~\alpha u_{|A_{ij}}\ge v_{|A{ij}}\right\}$ is nonempty, $\inf_{x\in\Omega} u(x)>0$ and $v$ is bounded above. It follows from the continuity of $u$ and $v$ over $A_{ij}$ that $\alpha_{ij}u\ge v$ in $A_{ij}$. Moreover, from the definition of $\inf,$ there exists $x_{ij}\in A_{ij}$ such that $\alpha_{ij}u(x_{ij})=v(x_{ij}).$ Let \[\alpha_0=\max_{ij}(\alpha_{ij})=\alpha _{i_0j_0}.\]   Then, $\alpha_0u\ge v$  in $\Omega.$ Denote by $x_0:=x_{i_0j_0}$. We claim that $\alpha_0\le1,$ which would then prove our lemma. Suppose to the contrary that $\alpha_0>1$. Then,
		$$ \begin{array}{lll}
			0 &= &\alpha_0u(x_0)-v(x_0)\vspace{5 pt}\\
			&\ge &\alpha_0\mathcal{T}( u)(x_0)-\mathcal{T}(v)(x_0)\vspace{7pt}\\
			&=  &\int_\Omega k(x_0,y)\left[\alpha_0F(u(y))-F(v(y))\right]{\rm d}y.
		\end{array}$$
		From assumption \eqref{F.cond}, and  since $F$ is increasing, we get 
		\[\alpha_0F(u(y)) = \alpha_0u(y)\frac{F(u(y))}{u(y)}
		> \alpha_0u(y)\frac{F(\alpha_0u(y))}{\alpha_0u(y)} 
		\geq F(v(y))\text{  for all }y.\]
		Thus,
		$$\int_\Omega k(x_0,y)\left[\alpha_0F(u(y))-F(v(y))\right]{\rm d}y>0,$$ which is contradiction. Hence, $u\ge \alpha_0u\ge v\text{ in }\Omega.$ Therefore, $u\geq v$ in $\Omega$. 
	\end{proof}

	Lemma \ref{towards.uniqueness} leads to  the uniqueness of a non-zero stationary solution of \eqref{IDE}. 
	
	\begin{corollary}[Uniqueness of the stationary state]\label{cor} There exists at most one non-zero stationary solution of \eqref{IDE} in $X$.
	\end{corollary}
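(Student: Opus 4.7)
The plan is to deduce the corollary directly from Lemma \ref{towards.uniqueness} combined with Lemma \ref{equilibrium.is.positive}, by applying the comparison lemma twice with the roles of the two purported stationary solutions swapped.

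More precisely, suppose $w_1, w_2 \in X$ are two non-zero stationary solutions of \eqref{IDE}. First I would invoke Lemma \ref{equilibrium.is.positive} to conclude that both $w_1$ and $w_2$ are strictly positive everywhere on $\Omega$. In particular, each of them is a strictly positive function that satisfies the equality $w_i = \mathcal{T}(w_i)$, which trivially makes it both a supersolution (i.e.\ $w_i \ge \mathcal{T}(w_i)$) and a subsolution (i.e.\ $w_i \le \mathcal{T}(w_i)$) of the fixed-point equation.

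Next, I would apply Lemma \ref{towards.uniqueness} twice. First, taking $u := w_1$ (which is positive and satisfies $u \ge \mathcal{T}(u)$) and $v := w_2$ (which satisfies $v \le \mathcal{T}(v)$), the lemma yields $w_1 \ge w_2$ on $\Omega$. Swapping the roles by taking $u := w_2$ and $v := w_1$, the same lemma gives $w_2 \ge w_1$. Combining the two inequalities yields $w_1 = w_2$, which is exactly the asserted uniqueness.

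There is really no serious obstacle here, since the entire substance of the comparison argument (including the clever use of \eqref{F.cond} and the partition of $\Omega$ into subintervals where $u, v$ are continuous) has already been absorbed into Lemma \ref{towards.uniqueness}. The only small thing to notice is that Lemma \ref{towards.uniqueness} has a one-sided hypothesis $u > 0$, so applying it in both directions requires that \emph{both} candidate stationary solutions be positive on $\Omega$; this is precisely what Lemma \ref{equilibrium.is.positive} guarantees for any non-zero stationary solution in $X$.
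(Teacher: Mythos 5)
Your proposal is correct and follows essentially the same route as the paper: both stationary solutions are shown to be positive via Lemma \ref{equilibrium.is.positive}, and Lemma \ref{towards.uniqueness} is applied twice with the roles of $w_1$ and $w_2$ swapped to obtain the two opposite inequalities. Your remark that the one-sided positivity hypothesis of Lemma \ref{towards.uniqueness} is what forces the appeal to Lemma \ref{equilibrium.is.positive} for each solution separately is exactly the point the paper's proof also relies on.
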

	\begin{proof}
		In fact, if there exists  two non-zero stationary solutions $u$ and $v$, we  get $u\ge \mathcal{T}(u)$ and $v\le \mathcal{T}(v).$  Lemma \ref{equilibrium.is.positive} implies that $u>0$ and Lemma \ref{towards.uniqueness}  yields  that $u\ge v$. Also, we have $v\ge \mathcal{T}(v)$ and $u\le T(u).$ From Lemma \ref{equilibrium.is.positive}, we then get that  $v>0.$ Lemma \ref{towards.uniqueness}  implies that $v\ge u.$ Therefore, $u\equiv v$.
	\end{proof}

The following lemma will be used in proving the convergence in a certain mode 	of a sequence $\{u_n\}_n$ satisfying \eqref{IDE}. The lemma is announced in terms two types of  equicontinuity of a sequence of functions. We will recall  the definitions of pointwise and uniform equicontinuity in what follows and then announce the lemma involving these notions. 

\begin{definition}[Pointwise equicontinuous]A sequence of functions $\{f_n\}_n$ is said to be pointwise equicontinuous on a set $J$ if 
\begin{equation}\label{point.equi}
\begin{array}{l}
\forall \varepsilon>0,~\forall z\in J,~ \exists~ \delta(z)>0,\text{ such that }
|f_n(x)-f_n(z)|<\varepsilon,\vspace{5 pt}\\\text{ whenever }x\in J ~\text{ and } ~|x-z|<\delta(z).
\end{array}
\end{equation}
\end{definition}

\begin{definition}[Uniformly equicontinuous] A sequence of functions $\{f_n\}_n$ is said to be uniformly equicontinuous on a set $J$ if 
\begin{equation}\label{uniform.equi}
\begin{array}{l}
\forall \varepsilon>0,~ \exists~ \delta>0,\text{ such that }
|f_n(x)-f_n(y)|<\varepsilon\vspace{5 pt}\\\text{ whenever }x,~y\in J \text{ and } |x-y|<\delta.
\end{array}
\end{equation}

\end{definition}	
	
	\begin{lem}\label{equicontinuity}
		Let $J$ be a compact set. If $\left\{f_n\right\}_n\subset C(J)$ is pointwise equicontinuous, then  $\left\{f_n\right\}_n$ is uniformly equicontinuous.
	\end{lem}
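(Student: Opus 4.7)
The plan is to mimic the classical proof that a continuous function on a compact set is uniformly continuous, but apply it uniformly to the whole family $\{f_n\}_n$ by exploiting the fact that the modulus of continuity $\delta(z)$ in the definition of pointwise equicontinuity does \emph{not} depend on $n$.

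Fix $\varepsilon>0$. By the pointwise equicontinuity assumption, for every $z\in J$ there exists $\delta(z)>0$ such that $|f_n(x)-f_n(z)|<\varepsilon/2$ for every $n$ and every $x\in J$ with $|x-z|<\delta(z)$. The family of open balls $\{B(z,\delta(z)/2)\}_{z\in J}$ then forms an open cover of $J$. Since $J$ is compact, I extract a finite subcover $\{B(z_i,\delta(z_i)/2)\}_{1\le i\le N}$ and set
\[
\delta:=\min_{1\le i\le N}\frac{\delta(z_i)}{2}>0.
\]

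Now for any $x,y\in J$ with $|x-y|<\delta$, I choose an index $i$ such that $x\in B(z_i,\delta(z_i)/2)$. Then $|x-z_i|<\delta(z_i)/2\le \delta(z_i)$ and, by the triangle inequality,
\[
|y-z_i|\le |y-x|+|x-z_i|<\delta+\frac{\delta(z_i)}{2}\le\delta(z_i).
\]
Applying the pointwise equicontinuity inequality at $z_i$ to both $x$ and $y$ yields, for every $n$,
\[
|f_n(x)-f_n(y)|\le|f_n(x)-f_n(z_i)|+|f_n(z_i)-f_n(y)|<\frac{\varepsilon}{2}+\frac{\varepsilon}{2}=\varepsilon,
\]
which is exactly the uniform equicontinuity condition \eqref{uniform.equi}.

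There is no real obstacle here; the result is essentially the Heine--Cantor theorem upgraded to a family. The only subtlety worth double-checking is the factor $1/2$ in the radii of the covering balls, which is precisely what allows the triangle inequality to put $y$ inside the ball of radius $\delta(z_i)$ around $z_i$ even though $y$ itself is only close to $x$.
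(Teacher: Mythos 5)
Your proof is correct and follows essentially the same route as the paper's: halve the radii of the pointwise-equicontinuity balls, extract a finite subcover by compactness, take $\delta$ as the minimum half-radius, and use the triangle inequality to place both $x$ and $y$ in a single full-radius ball. No gaps; nothing further to add.
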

	\begin{proof} Let $\varepsilon>0$. For every $z\in J$ , there exists $\delta_z>0$ such that for all $x\in B_J(z,\delta_z)$, we have $|f_n(x)-f_n(z)|<{\varepsilon}/{2}$ for all $n\in \mathbb{N}$, where $B(z,\delta_z)$  is the open ball of center $z$ and radius $\delta_z$ and $B_J(z,\delta_z)=J\cap B(z,\delta_z)$. We have \[\displaystyle{\bigcup_{z\in J}B_J\left(z,\frac{\delta_z}{2}\right)=J}.\] By compactness of $J$, there exist $z_1,\ldots,z_m\in J$ such that \[J=\bigcup_{i=1}^mB_J\left(z_i,\frac{\delta_{z_i}}{2}\right).\] We take $\delta_0<\min_{1\le i\le m}\left(\frac{\delta_{z_i}}{2}\right).$
		Let $x,y\in J$ such that $|x-y|<\delta_0$. Then, $x\in B_J\left(z_j,\frac{\delta_{z_j}}{2}\right),$ for some $1\le j\le m$. Now, $$|y-z_j|\le |x-y|+|x-z_j|<\delta_0+\frac{\delta_{z_j}}{2}\le \frac{\delta_{z_j}}{2}+\frac{\delta_{z_j}}{2}=\delta_{z_j}.$$ Thus, $y\in B_J(z_j,\delta_{z_j}).$ It then follows that $$|f_n(x)-f_n(y)|\le|f_n(x)-f_n(z_j)|+|f_n(y)-f_n(z_j)|<\frac{\varepsilon}{2}+\frac{\varepsilon}{2}=\varepsilon,$$ for all $n\in \mathbb{N}$. Hence, the set   $\left\{f_n\right\}_n$ is uniformly equicontinuous.

	\end{proof}
	
	\begin{lem}\label{T(X)}
		If $u_0\in X$, then the sequence $\{u_n\}_n,$ obtained from \eqref{IDE} with an initial condition $u_0,$ satisfies $u_n\in X$ for all $n\in\mathbb{N}.$ Moreover, if $\{u_n\}_n$ converges pointwise to $w\in  L^2(\Omega)$, then  $w\in X$.
	\end{lem}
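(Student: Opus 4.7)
The plan is to establish the first claim by induction on $n$, and then derive the second claim by passing to the limit in the recursion $u_{n+1}=\mathcal{T}(u_n)$ via dominated convergence, which reduces it to the first claim.

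For the inductive step, suppose $u_n\in X$; I must show $u_{n+1}=\mathcal{T}(u_n)\in X$. That $\mathcal{T}(u_n)$ lies in $L^\infty(\Omega)\subset L^2(\Omega)$ is immediate, since $|\mathcal{T}(u_n)(x)|\le 2a\Lambda M$ by \eqref{k.bounded} and (H2). The substance is to verify continuity of $\mathcal{T}(u_n)$ on each piece $\Omega_i$ of the fixed partition. Fix $x_0\in\Omega_i$ and take $x\in\Omega_i$ with $x\to x_0$; decomposing
\[
\mathcal{T}(u_n)(x)=\sum_{j}\int_{\Omega_j}k(x,y)F(u_n(y))\,{\rm d}y,
\]
for each $j$ and each fixed $y\in\Omega_j$ the map $x\mapsto k(x,y)$ is continuous at $x_0$ by (H1), because $(x,y)$ stays in the single rectangle $\Omega_i\times\Omega_j$ on which $k$ is continuous. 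The integrand is dominated by $\Lambda M$ uniformly in $x$ and $y$, so dominated convergence gives $\mathcal{T}(u_n)(x)\to\mathcal{T}(u_n)(x_0)$. Thus $\mathcal{T}(u_n)$ is continuous on $\Omega$ outside the finite set of partition points, so $u_{n+1}\in X$. It is worth noting that this argument used only measurability of $u_n$ and boundedness of $F(u_n)$, not membership in $X$.

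For the second claim, suppose $u_n\to w$ pointwise on $\Omega$ with $w\in L^2(\Omega)$. Continuity of $F$ yields $F(u_n(y))\to F(w(y))$ for every $y$, and $|F(u_n(y))|\le M$ uniformly in $n$ by (H2). Dominated convergence then gives, for every $x\in\Omega$,
\[
u_{n+1}(x)=\int_\Omega k(x,y)F(u_n(y))\,{\rm d}y\;\longrightarrow\;\int_\Omega k(x,y)F(w(y))\,{\rm d}y=\mathcal{T}(w)(x),
\]
while also $u_{n+1}(x)\to w(x)$. Hence $w=\mathcal{T}(w)$ pointwise on $\Omega$. Because $w$ is measurable (as a pointwise limit of measurable functions) and $F(w)$ is bounded, the inductive-step argument applies verbatim to $w$, showing $\mathcal{T}(w)\in X$. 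Therefore $w=\mathcal{T}(w)\in X$.

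I do not anticipate a real obstacle. The whole plan hinges on interchanging limit and integral, and the uniform bound $M$ on $F$ makes dominated convergence routine. The only point demanding care is splitting the integral over $\Omega$ by the pieces of the partition so that $k$ is genuinely continuous on each strip; this is bookkeeping rather than a genuine difficulty. Note in particular that Lemma \ref{equicontinuity} is not needed here, because the identity $w=\mathcal{T}(w)$ transfers the regularity of $\mathcal{T}$-images directly to $w$.
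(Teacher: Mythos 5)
Your proof is correct, and for the second claim it takes a genuinely different and leaner route than the paper. For the first claim your argument is essentially the paper's: split $\int_\Omega$ along the partition, use continuity of $k$ on each rectangle $\Omega_i\times\Omega_j$ and the uniform bound $\Lambda M$ to pass limits inside the integral. (Your observation that only measurability of $u_n$ and boundedness of $F(u_n)$ are needed is accurate, and it in fact sharpens the paper's claim: the discontinuity points of $u_0$ never propagate, since they live in the $y$-variable and are washed out by the integral.) For the second claim the paper argues by contradiction: it fixes a compact interval $D$ inside a continuity rectangle, proves $\{u_n\}$ is pointwise equicontinuous there via uniform continuity of $k$ on $\overline{D\times B_{ij}}$, upgrades to uniform equicontinuity with Lemma \ref{equicontinuity}, and concludes that the pointwise limit is a uniform limit of continuous functions, hence continuous. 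You instead pass to the limit in the recursion to obtain the fixed-point identity $w=\mathcal{T}(w)$ (exactly the computation the paper itself performs inside Lemma \ref{conv.in.L2}) and then let the smoothing property of $\mathcal{T}$ established in the first claim transfer regularity to $w$. This bypasses Lemma \ref{equicontinuity} and the equicontinuity machinery entirely, at the cost of nothing; the only hypothesis it consumes beyond the paper's is the continuity of $F$, which is assumption \ref{F_non_negative}. The one point worth stating explicitly if you write this up is that the limit interchange $\lim_x$ versus $\int$ in the continuity step should be run along arbitrary sequences $x_m\to x_0$ within $\Omega_i$ so that dominated convergence applies in its standard sequential form; this is cosmetic.
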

	
	\begin{proof} As $u_0\in X$, there exists $m$ subintervals $\{A_i\}_{1\le i\le m}$  in  $\Omega,$ such that   $u_0$ is continuous over each $A_i$.
		We claim that for every $n$, $u_n$ is continuous on $\Omega$  except possibly at the points $\{a_i\}_{1\leq i\leq n}$ (introduced in assumption \ref{discontinuous}) and at  the discontinuity points of $u_0,$ which are finitely many. Let $B_{ij}$ be the interval defined by \[B_{ij}:=\Omega_{i}\cap A_j.\] For $x\in B_{ij},$ we have
		\[u_1(x)  = \int_\Omega k(x,y)F(u_0(y))\,{\rm d}y=\sum_{i,j}\int_{B_{ij}}k(x,y)F(u_0(y))\,{\rm d}y.\]
		Since $y\mapsto k(x,y)F(u_0(y))$ is continuous on each $B_{ij},$ it follows  that $u_1$ is continuous at $x$ as a sum of a finite number of  continuous functions. Hence, $u_1$ is continuous on $\Omega$ except possibly at the points mentioned in the claim. Suppose now that the claim is true  for all $u_i$  such that   $i\leq n$. Then, 
		$$
		u_{n+1}(x)  = \int_\Omega k(x,y)F(u_n(y))\,{\rm d}y=\sum_{i,j}\int_{B_{ij}}k(x,y)F(u_n(y))\,{\rm d}y.$$
		As the function $y\mapsto k(x,y)F(u_n(y))$ is continuous on each $B_{ij},$ it then follows that  $u_{n+1}$ is continuous at $x$. This proves the first assertion in our Lemma. 
		
		We move now to the proof of the second assertion. Suppose that $\{u_n\}_n$ converges pointwise to $w\in L^2(\Omega).$  We claim that $w$  is continuous on $\Omega$ except possibly at the points $\{a_i\}_{1\leq i \leq n}$ and at the points of discontinuity of $u_0.$ Suppose that there exists $z\in B_{i_0j_0}$ such that $w$ is discontinuous at $z$, for some $i_0$ and $j_0.$ Since $B_{i_0j_0}$ is open, there exists $r>0$ such that \[D:=[z-r,z+r]\subset B_{i_0j_0}.\] We have $\{u_n\}_n$ are all continuous on $D.$ Let us show that  $\{u_n\}_n$ is pointwise equicontinuous. Let $x_0\in D$ and $\varepsilon>0.$ The function $(x,y)\mapsto k(x,y)$ is uniformly continuous on $D\times B_{ij},$ for all $(i,j).$ This is because the function $k$ can be extended to a  continuous and bounded function on each of the compact sets $\overline{D\times B_{ij}}.$   Thus,  there exist $h_{ij}>0$ such that, for all $(x,y)$ and $(x_1,y_1)$ in $D\times B_{ij},$ we have 
		\[|(x-x_1,y-y_1)|<h_{ij}\implies |k(x,y)-k(x_1,y_1)|<l:=\frac{\varepsilon}{2aM},\] where $M$ is an upper bound of $F.$ Let $h:=\min_{i,j}\{h_{ij}\}.$ Then, for all $i,j$ and for all $(x,y)\in D\times B_{ij},$ we have
		\[|x-x_0|=|(x,y)-(x_0,y)|<h\implies |k(x,y)-k(x_0,y)|<l. \] 
		Thus, for every $n$ and for every $x\in D,$ such that $|x-x_0|<h,$ we have
		\[\begin{array}{lll}
			|u_n(x)-u_n(x_0)|&<&M\int_{-a}^a|k(x,y)-k(x_0,y)|\,{\rm d}y\vspace{5 pt}\\
			&\leq &M\sum_{i,j}\int_{B_{ij}}|k(x,y)-k(x_0,y)|\,{\rm d}y\vspace{5 pt}\\
			&<&M\sum_{i,j}\frac{\varepsilon}{2aM}|B_{ij}|=\varepsilon, 
		\end{array}\]
		where $|B_{ij}|$ denotes the Lebesgue measure of $B_{ij}.$
		Hence,  $\{u_n\}_n$ is pointwise equicontinuous. From Lemma \ref{equicontinuity}, we conclude that $\{u_n\}_n$ is uniformly equicontinuous because $D$ is compact. Moreover, $\{u_n\}_n\subseteq C(D)$ converges to $w$ pointwise and it is equicontinuous. Therefore, $\{u_n\}_n$ converges uniformly to $w$; hence, $w$ is continuous over $D$.
		This contradicts the assumption that $w$ is discontinuous at $z\in D$. The claim then follows and  $w\in X$. 
	\end{proof}

	\begin{lem}\label{monotone}
		If $u_1\ge u_0$ (respectively $u_1\le u_0$), then $\{u_n\}_n$ is increasing (respectively decreasing).
	\end{lem}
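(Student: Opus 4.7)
The plan is to reduce the statement to a one-line induction once the monotonicity of the operator $\mathcal{T}$ is established. Concretely, I would first prove the auxiliary observation that $\mathcal{T}$ preserves pointwise inequalities on $\Omega$: namely, if $u,v:\Omega\to\mathbb{R}$ satisfy $u\ge v$ pointwise, then $\mathcal{T}(u)\ge\mathcal{T}(v)$ pointwise on $\Omega$. This uses only the nonnegativity of the kernel $k$ from \ref{discontinuous} together with the fact that $F$ is nondecreasing on \emph{all} of $\mathbb{R}$. The latter fact is not stated verbatim but is immediate from the assumptions: by \ref{on.F}, $F$ is strictly increasing on $[0,\infty)$ and identically zero on $(-\infty,0)$; combined with $F\ge 0$ from \ref{F_non_negative}, this gives $F(s)\le F(t)$ whenever $s\le t$ (treating separately the cases where both arguments are nonnegative, both are negative, or straddle $0$). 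Hence for any $y\in\Omega$, $F(u(y))\ge F(v(y))$, and multiplying by the nonnegative weight $k(x,y)$ and integrating preserves the inequality.

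With monotonicity of $\mathcal{T}$ in hand, I would perform induction on $n$. For the increasing case, the base step $u_1\ge u_0$ is the hypothesis. For the inductive step, assume $u_n\ge u_{n-1}$ on $\Omega$. Applying $\mathcal{T}$ to both sides and using its monotonicity yields
\[
u_{n+1}=\mathcal{T}(u_n)\ge \mathcal{T}(u_{n-1})=u_n\quad\text{on }\Omega,
\]
which closes the induction. The decreasing case is identical with all inequalities reversed, starting from the hypothesis $u_1\le u_0$.

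There is essentially no obstacle: the only point deserving a moment of care is the extension of the monotonicity of $F$ across $0$, since \ref{on.F} only states strict monotonicity on $[0,\infty)$. Once that is noted, everything else is a routine two-line induction, and in particular no continuity, $L^2$ bound, or appeal to the finite set of discontinuity points of $k$ or $u_0$ is needed.
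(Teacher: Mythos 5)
Your proof is correct and follows essentially the same route as the paper: an induction on $n$ whose inductive step applies $\mathcal{T}$ to the inequality $u_n\ge u_{n-1}$ and uses $k\ge 0$ together with the monotonicity of $F$. The only difference is that you make explicit the (valid) observation that $F$ is nondecreasing on all of $\mathbb{R}$, a point the paper passes over with the phrase ``because $F$ is increasing.''
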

	
	\begin{proof}Suppose  that, for all $i
		\leq n-1,$ we have  $u_{i+1}\ge u_i.$ Then,
		$$u_{n+1}(x)=\int_\Omega k(x,y)F(u_n(y))\,{\rm d}y\geq \int_\Omega k(x,y)F(u_{n-1}(y))\,{\rm d}y=u_n(x),$$
		because $F$ is increasing.
		The other case can be proven similarly. 
	\end{proof}

	\begin{lem}\label{conv.in.L2} If $\{u_n\}_n$ converges pointwise to $w$, then $\{u_n\}_n$ converges to $w$  in $L^2(\Omega).$
	\end{lem}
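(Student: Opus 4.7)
The plan is to prove $L^2$ convergence by applying the dominated convergence theorem, which becomes straightforward once a uniform $L^\infty$ bound on the sequence is established.

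First I would observe that, because of assumptions \ref{discontinuous} and \ref{F_non_negative}, for every $n\geq 1$ and every $x\in\Omega$,
\[
0\le u_n(x)=\int_\Omega k(x,y)F(u_{n-1}(y))\,{\rm d}y \le \Lambda M |\Omega|=2a\Lambda M.
\]
So the sequence $\{u_n\}_{n\ge 1}$ is uniformly bounded by the constant $C:=2a\Lambda M$. Since $\Omega=(-a,a)$ is bounded, the constant function $C$ lies in $L^2(\Omega)$.

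Next, passing to the pointwise limit, we get $0\le w(x)\le C$ almost everywhere on $\Omega$. Therefore,
\[
|u_n(x)-w(x)|^2 \le (2C)^2 = 16 a^2\Lambda^2 M^2 \quad \text{for all } n\ge 1 \text{ and a.e. } x\in\Omega,
\]
and the right hand side is an integrable dominating function on the bounded set $\Omega$. The hypothesis $u_n\to w$ pointwise implies $|u_n-w|^2\to 0$ pointwise, so Lebesgue's dominated convergence theorem yields
\[
\|u_n-w\|_2^2=\int_\Omega |u_n(x)-w(x)|^2\,{\rm d}x\longrightarrow 0,
\]
which is exactly $L^2$ convergence. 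There is no real obstacle here; the only subtle point to notice is that the uniform bound is only claimed for $n\ge 1$ (we have no a priori $L^\infty$ control on the initial datum $u_0\in X$), but this is harmless since $L^2$ convergence is a tail property of the sequence.
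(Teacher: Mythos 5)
Your proof is correct, and it is actually more direct than the one in the paper. The paper first shows that the pointwise limit $w$ is a stationary solution ($w=\mathcal{T}(w)$, via dominated convergence applied inside the operator), then writes $u_n-w=\mathcal{T}(u_{n-1})-\mathcal{T}(w)=\int_\Omega k(\cdot,y)\left[F(u_{n-1}(y))-F(w(y))\right]{\rm d}y$ and applies the dominated convergence theorem twice, once for the inner $y$-integral (using the continuity of $F$ to get $F(u_{n-1}(y))\to F(w(y))$ pointwise) and once for the outer $x$-integral, with the same uniform bound $16a^2\Lambda^2M^2$ that you use. You bypass all of that: you exploit only the uniform $L^\infty$ bound $0\le u_n\le 2a\Lambda M$ for $n\ge1$, which follows immediately from \ref{discontinuous} and \ref{F_non_negative}, and apply dominated convergence a single time directly to $|u_n-w|^2$ on the bounded domain $\Omega$. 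This buys a shorter argument that does not need $w$ to be a fixed point of $\mathcal{T}$ nor the continuity of $F$, and your remark that the bound is only available for $n\ge1$ (since $u_0\in X$ carries no a priori $L^\infty$ control) is exactly the right caveat and is indeed harmless. The only thing your route does not deliver is the identity $\mathcal{T}(w)=w$, which the paper's proof establishes in passing; but that fact is re-derived independently wherever it is needed in the proofs of the main theorems, so nothing is lost.
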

	
	\begin{proof}Since $w$ is the limit of $\{u_n\}_n,$ it is a stationary solution: indeed 
		$$\mathcal{T}(w)=\mathcal{T}\left(\lim_{n\to\infty}u_n\right)=\lim_{n\to\infty}\mathcal{T}(u_n)=\lim_{n\to\infty}u_{n+1}=w.$$
		In the above equalities, we can interchange the limit and the integral by Lebesgue dominated convergence theorem.  Thus, 
		\[\|u_n-w\|_2^2=\|u_n-\mathcal{T}(w)\|_2^2=\int_\Omega\left(\int_{-a}^a k(x,y)(F(u_{n-1}(y))-F(w(y)))\,{\rm d}y\right)^2\,{\rm d}x.\]
		We have the following first inequality:
		$$\begin{array}{l}
			\left(\int_{-a}^a k(x,y)(F(u_{n-1}(y))-F(w(y)))\,{\rm d}y\right)^2\vspace{5 pt}\\
			\le\left(\int_{-a}^a[\Lambda(M+M)]\,{\rm d}y\right)^2
			\le 16a^2M^2\Lambda^2\in L^1(\Omega).
		\end{array}$$
		Moreover, we have 
		\[k(x,y)\left[F(u_{n-1}(y))-F(w(y))\right]\le2\Lambda M\in L^1(\Omega).\]  Hence, using the Lebesgue dominated convergence theorem, we obtain 
		$$\lim_{n\to\infty}\|u_n-w\|_2^2=\int_\Omega\left(\int_{-a}^a\lim_{n\to\infty} k(x,y)(F(u_{n-1}(y))-F(w(y)))\,{\rm d}y\right)^2\,{\rm d}x=0.$$ The proof of Lemma \ref{conv.in.L2} is now complete. 
	\end{proof}

	We are now in position to prove the three main theorems announced in Section \ref{intro}.

	\begin{proof}[{\bf Proof of Theorem \ref{convergence_to_equil}}]   We know that $0$ is a stationary solution of $\mathcal{T}$. We claim that there is no positive stationary solution. Suppose that there exists a positive stationary solution $w$  of $\mathcal{T}$. It then follows, from Lemma \ref{equilibrium.is.positive}, that  $w>0$ on $\Omega$. Using assumption \eqref{F.cond}, we have
		\begin{equation}\label{ineq}F(w(y))=w(y)\frac{F(w(y))}{w(y)}<r_0w(y),   \text{ for all  }y\in \Omega,\end{equation}
		 where $r_0=F'(0).$
		Thus,  \begin{equation}\label{karim}\begin{array}{ll}w(x)&=\int_\Omega k(x,y)F(w(y))\,{\rm d}y\vspace{5 pt}\\
			&<r_0\int_\Omega k(x,y)w(y)\,{\rm d}y=\mathcal{T}_0(w)(x),
		\end{array}\end{equation} for all $x\in \Omega$. We will now prove that there exists $\mu\in(0,1),$ such that 
		\begin{equation}\label{ramadan}w\le (1-\mu)\mathcal{T}_0(w)\text{ in }\Omega.
		\end{equation} 
		Let $\alpha:=\sup_{x\in\Omega}\frac{w(x)}{\mathcal{T}_0(w)(x)}.$ Then there exists $x_0\in\overline{\Omega},$ such that
		\[\lim_{x\rightarrow x_0}\frac{w(x)}{\mathcal{T}_0(w)(x)}=\alpha.\] From \eqref{karim}, we get $0< \alpha\leq 1.$ Our goal is to prove that $0<\alpha<1.$ Suppose to the contrary that $\alpha=1.$
		We then have  \begin{equation}\label{equal}\lim_{x\rightarrow x_0}{w(x)}=\lim_{x\rightarrow x_0}\mathcal{T}_0(w)(x).\end{equation}
		 But \eqref{ineq} and Lebesgue dominated convergence theorem lead to
		\[\begin{array}{ll}
		\lim_{x\rightarrow x_0}{w(x)}&=\lim_{x\rightarrow x_0}\int_\Omega k(x,y)F(w(y))\,{\rm d}y\vspace{5 pt}\\
		&=\int_\Omega \lim_{x\rightarrow x_0}k(x,y)F(w(y))\,{\rm d}y\vspace{5 pt}\\
		&<r_0\int_\Omega \lim_{x\rightarrow x_0}k(x,y)w(y)\,{\rm d}y\vspace{5 pt}\\
		&=\lim_{x\rightarrow x_0}r_0\int_\Omega k(x,y)w(y)\,{\rm d}y\vspace{5 pt}\\
		&=\lim_{x\rightarrow x_0}\mathcal{T}_0(w)(x).
		\end{array}\]
		This contradicts \eqref{equal} and thus $\alpha<1.$ This proves the inequality \eqref{ramadan}. We then iterate to get 
		$$w\le(1-\mu)r_0\int_\Omega k(\cdot,y)w(y)\,{\rm d}y\le (1-\mu)^2\mathcal{T}_0^{\,2}(w)\le\text{\ldots}\le(1-\mu)^n\mathcal{T}_0^{\,n}(w),$$
		for all $n\in\mathbb{N}$. 
		
		Let us show  that $\inf_\Omega(\phi_0)>0,$ where $\phi_0$ is the principal eigenfunction of $\mathcal{T}_0$ associated with the principal eigenvalue $\lambda_0.$ In fact,  
		since $k(x,y)>\delta$ for all $(x,y),$ then, for all  $x\in\Omega$ we  have  $$\phi_0(x)=\frac{r_0}{\lambda_0}\int_\Omega k(x,y)\phi_0(y)\,{\rm d}y>\frac{r_0\delta}{\lambda_0}\int_\Omega \phi_0(y)\,{\rm d}y>0,$$ because $\phi_0\not\equiv 0$ and $\phi_0\geq 0.$ Hence,  $$\inf_{\Omega}\phi_0\geq \frac{r_0\delta}{\lambda_0}\int_\Omega \phi_0(y)\,{\rm d}y>0.$$ 
		We recall that $\phi_0>0$ in $\Omega$ and it is unique up to multiplication by a scalar.  Thus, we can assume that $w\le\phi_0$  over $\Omega$ since $\inf_\Omega\phi_0>0.$ 
		Therefore, $$\mathcal{T}_0(w)\le r_0\int_\Omega k(x,y)w(y)\,dy\le r_0\int_\Omega k(x,y)\phi_0(y)\,dy=\mathcal{T}_0(\phi_0).$$ Suppose  that, for all $i
		\leq n-1,$ we have $\mathcal{T}^{\,i}_0(w)\le\mathcal{T}^{\,i}_0(\phi_0).$ Then, $$\mathcal{T}^{\,n}_0(w)\le r_0\int_\Omega k(x,y)\mathcal{T}^{\,n-1}_0(w)(y)\,dy\le r_0\int_\Omega k(x,y)\mathcal{T}^{\,n-1}_0(\phi_0)(y)\,dy\le\mathcal{T}^{\,n}_0(\phi_0).$$ By induction, and using the assumption that $\lambda_0\leq 1,$ we then obtain that  $$ w\le(1-\mu)^n\mathcal{T}_0^{\,n}(w)\le (1-\mu)^n\mathcal{T}_0^{\,n}(\phi_0)=(1-\mu)^n\lambda_0^{n}\phi_0\leq (1-\mu)^n\phi_0,$$
	for all $n\in\mathbb{N}.$
		Since $0<(1-\mu)<1,$ passing to the limit as $n\rightarrow+\infty$ in the above inequality yields   $w\equiv0$ in $\Omega,$  which is contradiction. This completes the proof of the first result in the theorem.

		We move now to the proof of the second result in this theorem. Let $u_0\in X$ and fix $N>2a\Lambda M.$ Then, \[\mathcal{T}(N)=\int_{-a}^ak(x,y)F(N)\,{\rm d}y\le2a\Lambda M<N.\]
		From Lemma \ref{monotone}, it then follows that  $\{\mathcal{T}^{\,n}(N)\}_n$ is decreasing. Since $\{\mathcal{T}^{\,n}(N)\}$ is bounded below by $0$ uniformly, $\{\mathcal{T}^{\,n}(N)\}_n$ converges pointwise to $w.$  Lemma \ref{T(X)} yields the  existence of $w\in  X.$ From  the Lebesgue dominated convergence theorem, it follows  that 
		$$\mathcal{T}(w)=\mathcal{T}\left(\lim_{n\to\infty}\mathcal{T}^{\,n}(N)\right)=\lim_{n\to\infty}\mathcal{T}^{\,n+1}(N)=w.$$ The above means that 
		$w$ is stationary solution of $\mathcal{T}$. Since $\lambda_0\le1,$ it follows, from the first part of  Theorem 1,  that $w=0$. Now,   \[0\leq u_1=\mathcal{T}(u_0)\le2r\Lambda M<N.\]
		Suppose that, for  all $i\leq n,$ we have $u_i\le \mathcal{T}^{\,i-1}(N).$ Then,
		\[u_{n+1}=\int_{-a}^ak(\cdot,y)F(u_n(y))\,{\rm d}y\le\int_{-a}^ak(\cdot,y)F(\mathcal{T}^{\,n-1}(N)(y))\,{\rm d}y=\mathcal{T}^{\,n}(N).\]
		By induction, it then follows that  \[0\le u_n\le \mathcal{T}^{\,n-1}(N) \text{ for all }n\in\mathbb{N}.\] Taking the limits in the last inequality yields  that $\{u_n\}_n$ converges to $0$ pointwise. From Lemma \ref{conv.in.L2}, it follows that $\{u_n\}_n$ converges to $0$ in $L^2(\Omega)$.
	\end{proof}
	\begin{proof}[{\bf Proof of Corollary \ref{r0Theorem}}]
    From Theorem \ref{convergence_to_equil}, it suffices to prove that $\lambda_0\le1.$ First, we claim that $\varphi\in L^\infty(\Omega).$ We have 
    \begin{equation}
    \begin{array}{l}
        0<\lambda_0\varphi(x)=\int_\Omega k(x,y)\varphi(y)\,\mathrm{d}y\le \|k(x,\cdot)\|_{L^2(\Omega)}\|\varphi\|_{L^2(\Omega)}\\
        \le 2a\Lambda \|\varphi\|_{L^2(\Omega)}<\infty,~~ \text{ as $\varphi\in L^2(\Omega).$}
        \end{array}
    \end{equation}
     Hence, our claim that $\varphi\in L^\infty(\Omega)$ follows. Choosing $\varphi $ such that $\|\varphi\|_{L^\infty(\Omega)}=1,$ we then obtain  
    \[\begin{array}{cl}
    \lambda_0=\|\lambda_0\varphi\|_{L^\infty(\Omega)}&=\|\mathcal{T}_0(\varphi)\|_{L^\infty(\Omega)}\vspace{7 pt}\\
    &=\sup_\Omega \left|r_0\int_\Omega k(x,y)\varphi(y)\,\mathrm{d}y\right|\le \sup_\Omega r_0\int_\Omega k(x,y)\|\varphi\|_{L^\infty(\Omega)}\,\mathrm{d}y\vspace{7 pt}\\
    &=\sup_\Omega r_0\int_\Omega k(x,y)\,\mathrm{d}y
    \le\sup_\Omega r_0=r_0\le1.
    \end{array}\]
 This completes the proof of Corollary \ref{r0Theorem}.
\end{proof}		
\noindent We are now in position to prove Theorem \ref{Nonzero_Equilibrium}.
	
	\begin{proof}[{\bf Proof of Theorem \ref{Nonzero_Equilibrium}}]  Here, we have the assumption that $\lambda_0>1$ and $F(0)=0.$ Appealing to Lemma \ref{equilibrium.is.positive} and Corollary \ref{cor}, it suffices to find one positive stationary solution of $\mathcal{T}.$

		Let $h>0$ such that $\frac{r_0}{1+h}>1$ and $\frac{\lambda_0}{1+h}>1.$ Note that $h$ exists because $r_0>1$ and $\lambda_0>1.$ Since \[
		\begin{array}{ll}
			\lambda_0\phi_0(x)=\mathcal{T}_0(\phi_0)(x)&=r_0\int_{-a}^ak(x,y)\phi_0(y)\,{\rm d}y\vspace{5 pt}\\
			&\leq r_0\left(\int_{-a}^a(k(x,y))^2\,{\rm d}y\right)^{1/2}\left(\int_{-a}^a(\phi_0(y))^2\,{\rm d}y\right)^{1/2}
			\vspace{5 pt}\\
			&\le r_0\sqrt{2a}\Lambda\|\phi_0\|_2,
		\end{array}\] it follows that $\phi_0$ is bounded above by some constant $\kappa$ for all $x$.

		\noindent We claim that the limit \[\lim_{\varepsilon\to0}\frac{F(\varepsilon\phi_0(x))}{\varepsilon\phi_0(x)}=F^\prime(0)\] is uniform in $x\in\Omega.$ 
		
		From the definition of $r_0=F^\prime(0),$ we have that for any $\zeta>0$, there exists $\delta_0>0,$ such that  
		\[\text{for }  |t|<\delta_0,~\left|\frac{F(t)-F(0)}{t}-r_0\right|<\zeta.\]
		Let $\varepsilon^*>0$  be such that $\varepsilon^*\phi_0(x)<\varepsilon^*\kappa<\delta_0.$ In other words, $\varepsilon^*$ is small enough so that $\delta_0$  is a uniform upper bound of $\varepsilon^*\phi_0$. Hence, the claim follows. Thus, for $\zeta=\frac{hr_0}{1+h},$  we can find $\varepsilon_0>0,$ such that for all $\varepsilon\in(0,\varepsilon_0]$ we have $$\left|\frac{F(\varepsilon\phi_0(x))}{\varepsilon\phi_0(x)}-r_0\right|<\frac{hr_0}{1+h},~ \text{ for all }x\in\Omega.$$ Thus, \[\frac{F(\varepsilon\phi_0(x))}{\varepsilon\phi_0(x)}>\frac{r_0}{1+h},\] for all $\varepsilon\in(0,\varepsilon_0].$ Then, for all $\varepsilon\leq \varepsilon_0,$
		\begin{equation}\label{alpha}
			\begin{array}{ll}
				\mathcal{T}(\varepsilon \phi_0)&=\int_{\Omega} k(\cdot,y)\varepsilon\phi_0(y)\frac{F(\varepsilon\phi_0(y)}{\varepsilon\phi_0(y)}\,{\rm d}y\vspace{5 pt}\\
				&\geq\frac{r_0}{1+h}\int_\Omega k(\cdot,y)\varepsilon\phi_0(y)\,{\rm d}y=\frac{1}{1+h}\mathcal{T}_0(\varepsilon\phi_0)\vspace{5 pt}\\
				&=\frac{\lambda_0}{1+h}\varepsilon\phi_0\vspace{5 pt}\\
				&>\varepsilon\phi_0.
		\end{array}\end{equation}
		We claim that $\inf_\Omega(\mathcal{T}(\varepsilon\phi_0)-\varepsilon\phi_0)>0$. Suppose that $\inf_\Omega(\mathcal{T}(\varepsilon\phi_0)-\varepsilon\phi_0)=0.$ Then, there exists $x_0\in\bar{\Omega}$ such that \[\lim_{x\to x_0}\mathcal{T}(\varepsilon\phi_0)(x)=\lim_{x\to x_0}\varepsilon\phi_0(x)=\alpha.\]  From  \eqref{alpha}, we obtain 
		\[\alpha=\lim_{x\rightarrow x_0}\mathcal{T}(\varepsilon\phi_0)(x)\geq\lim_{x\to x_0}\frac{\lambda_0}{1+h}\varepsilon\phi_0(x)\geq \lim_{x\to x_0}\varepsilon\phi_0(x)=\alpha. \]
		This implies that $$\frac{\lambda_0}{1+h}\alpha=\alpha;$$
		 hence,  $\alpha=0$. But, 
		$$\begin{array}{lll}
			0&=\alpha&=\lim_{x\to x_0}\mathcal{T}(\varepsilon\phi_0)(x)\\
			&&=\lim_{x\to x_0}\int_\Omega k(x,y)F(\varepsilon\phi_0(y))\,dy\ge\delta\int_\Omega F(\varepsilon\phi_0(y))\,dy>0,
		\end{array}$$ and so we have a contradiction. Therefore, $ \inf_\Omega(\mathcal{T}(\varepsilon\phi_0)-\varepsilon\phi_0)>0.$ 
		
		Since the set of simple functions is dense in $L^\infty(\Omega),$ and $\varepsilon\phi_0\in L^\infty(\Omega)$,  there exists a sequence of simple functions $\{f_n\}_n$ that decreases uniformly to $\varepsilon\phi_0$. Thus, for \[\zeta_0:=\inf_\Omega(\mathcal{T}(\varepsilon\phi_0)-\varepsilon\phi_0)>0,\] there exists $n_\varepsilon\in\mathbb{N}$ such that, for all $n\ge n_\varepsilon,$ we have $|f_n(x)-\varepsilon\phi_0(x)|<\zeta_0$ for all $x\in \Omega$. Then, as $\mathcal{T}$ is increasing (which follows from the fact that $F$ is increasing), we get 
		$$\mathcal{T}(f_n)\ge \mathcal{T}(\varepsilon\phi_0)\ge \varepsilon\phi_0+\zeta_0>f_n$$ on $\Omega,$ for all $n\ge n_\varepsilon$. Since $f_n$ is a simple function, $f_n\in X.$  Now, for $n\ge n_\varepsilon$ we consider   the sequence $\{\mathcal{T}^{\,m}(f_n)\}_m.$  Since  $\mathcal{T}(f_{n})\ge \mathcal{T}(\varepsilon\phi_0) \ge f_{n}$ on $\Omega,$ 
		 Lemma \ref{monotone} implies that $\{\mathcal{T}^{\,m}(f_n)\}_m$ is increasing for all $\varepsilon\leq \varepsilon_0$ and for all $n\ge n_\varepsilon.$ Thus, for $0<\varepsilon<\varepsilon_0,$ we have $$\mathcal{T}^{\,m}(f_n)=\int_{-a}^ak(\cdot,y)F(\mathcal{T}^{\,m-1}(f_n)(y))\,{\rm d}y\le 2a\Lambda M,$$
		where $\Lambda$ and $M$ are the bounds of $k$ and $F$ respectively. Hence, $\{\mathcal{T}^{\,m}(f_n)\}_m$ is uniformly bounded and increasing. Therefore,  there exists $w\in X$ (by Lemma \ref{T(X)}) such that $w=\lim_{m\to\infty}\mathcal{T}^{\,m}(f_n)$ pointwise. The function $w$ is a positive stationary solution of $\mathcal{T}$   since 
		$$\mathcal{T}(w)=\mathcal{T}\left(\lim_{m\to\infty}\mathcal{T}^{\,m}(f_n)\right)=\lim_{m\to\infty}\mathcal{T}(\mathcal{T}^{\,m}(f_n))=\lim_{m\to\infty}\mathcal{T}^{\,m+1}(f_n)=w.$$
		Note that we were able to interchange the limit and the integral in the above equation because of the  Lebesgue dominated convergence theorem.

		\noindent We have
		\begin{equation}\label{bound}0<\delta\int_\Omega F(u_0(y)){\rm d}y\leq \mathcal{T}(u_0)\leq u_1\leq 2a\Lambda M.\end{equation}We fix $N>2a\Lambda M.$ Then, $\mathcal{T}(N)\le2a\Lambda M<N$. Appealing to Lemma \ref{monotone}, we get  $\{\mathcal{T}^{\,n}(N)\}_n$ is decreasing  and it is bounded  uniformly by $0.$     From Lemma \ref{T(X)}, there exists $w^*\in X$ such that $\{\mathcal{T}^{\,n}(N)\}_n$ converges pointwise to $w^*.$ This means that $w^*$ is a stationary solution of $\mathcal{T}$. Then, for $\varepsilon\leq \varepsilon_0,$ we have \[\mathcal{T}(f_n)<N,\text{ for all }n\ge n_\varepsilon.\] We know that $F$ is increasing. By induction, it  follows that  \[\mathcal{T}^{\,n}(f_n)\le \mathcal{T}^{\,n}(N).\]  Taking the limit on both sides,   we get that $w\le w^*$. From the uniqueness of the non-zero stationary solution of $\mathcal{T}$ in $X$, which was established in Corollary \ref{cor}, we have $w^*=w\not\equiv0$. From \eqref{bound}, we can find $\varepsilon\in(0,\varepsilon_0]$ such that \[\varepsilon\phi_0\le\varepsilon\phi_0+\nu\le u_1\le N\text{ in }\Omega,\] for some $\nu>0.$ Then, there exists $n_0$ such that, for all $n\ge n_0,$ we have $f_n< \varepsilon\phi_0+\nu$ in $\Omega.$  Let $n_1>\max\{n_{0},n_\varepsilon\}$. Then, $f_{n_1}\le u_1\le N.$
		Suppose that, for all $i\le n-1$, we have $$\mathcal{T}^{\,i}(f_{n_1})\le u_{i+1}\le \mathcal{T}^{\,i}(N).$$ Then, since $F$ is increasing, we obtain
		$$\begin{array}{ll}\mathcal{T}^{\,n}(f_{n_1})&=\int_\Omega k(\cdot,y)F(\mathcal{T}^{\,n-1}(f_{n_1}(y)))\,{\rm d}y\vspace{5 pt}\\
			&\leq\int_\Omega k(\cdot,y)F(u_n(y))\,{\rm d}y\vspace{5 pt}\\
			&\leq \int_\Omega k(\cdot,y)F(\mathcal{T}^{\,n-1}(N)(y))\,{\rm d}y.
		\end{array}$$
		Hence, $$\mathcal{T}^{\,n}(f_{n_1})\le \ u_{n+1}\le \mathcal{T}^{\,n}(N),\text{ for all }n\in \mathbb{N}.$$
		By passing to the limit as $n\rightarrow+\infty,$ we get $$w\le \lim_{n\to\infty}u_{n+1}\le w.$$
		Therefore, $\{u_n\}_n$ converges pointwise to $w,$ which is the unique positive stationary solution of $T$ over $\Omega.$ From Lemma \ref{conv.in.L2},  we obtain that $\{u_n\}_n$ converges to $w$  in $L^2(\Omega)$. This completes the proof of Theorem \ref{Nonzero_Equilibrium}.
	\end{proof}

	\medskip

	\begin{proof}[{\bf Proof of Theorem \ref{case_F(0)_>0}}]
		In this proof, we use the assumption  that $F(0)>0$. We fix $N>2a\Lambda M$ and 
 note that	
		\[\mathcal{T}(N)=\int_{-a}^ak(x,y)F(N)\,{\rm d}y\le2a\Lambda M<N.\]  
		 Lemma \ref{monotone} then yields that $\{\mathcal{T}^{\,n}(N)\}$ is decreasing. Since $\{\mathcal{T}^{\,n}(N)\}$  is bounded below by $0$ uniformly,  Lemma \ref{T(X)} leads to the existence of  $w\in  X,$  such that $\{\mathcal{T}^{\,n}(N)\}_n$ converges pointwise to $w.$ That is, $w$ is stationary solution of $\mathcal{T}.$ From Corollary \ref{cor}, it follows that $w$ is the unique stationary solution of \eqref{IDE}. We have $$\mathcal{T}(0)(x)=\int_\Omega k(x,y)F(0)\,dy\ge2a\delta F(0)>0 ~\text{ for all }x\in\Omega.$$ From Lemma \ref{monotone}, it follows that    $\{\mathcal{T}^{\,n}(0)\}_n$ is increasing. Since $\{\mathcal{T}^{\,n}(0)\}_n$ is uniformly bounded above by $N,$ Lemma \ref{T(X)} then yields the existence of $w_0\in X,$ such that $\{\mathcal{T}^{\,n}(N)\}_n$ converges pointwise to $w_0.$ This means that $w_0$ is stationary solution of $\mathcal{T}.$ Corollary \ref{cor} yields that $w_0=w,$  the unique stationary solution of \eqref{IDE}. Also, we have \[N\ge T(u_0)\ge 0.\] Suppose that it holds true that,  for all  $i\leq n$,  \[\mathcal{T}^{\,n-1}(N)\ge u_n\ge \mathcal{T}^{\,n-1}(0).\]
		We compute		
		$$\begin{array}{ll}\mathcal{T}^{\,n}(N)(x)&=\int_\Omega k(x,y)F(\mathcal{T}^{\,n-1}(N)(y))\,dy\vspace{5 pt}\\
			&\geq\int_\Omega k(x,y)F(u_n(y))\,dy\vspace{5 pt}\\
			&=u_{n+1}(x)\vspace{5 pt}\\&
			\geq \int_\Omega k(x,y)F(\mathcal{T}^{\,n-1}(0)(y))\,dy=\mathcal{T}^{\,n}(0)(x).
		\end{array}
		$$
		Thus, \[\mathcal{T}^{\,n-1}(N)\ge u_n\ge \mathcal{T}^{\,n-1}(0),\] for all $n\in \mathbb{N}.$ Taking the limits in the above inequality, we obtain that $$w\ge\lim_{n\to\infty}u_n\ge w.$$
		Hence $\{u_n\}_n$ converges to $w$ pointwise. From Lemma \ref{conv.in.L2}, it follows that $\{u_n\}_n$ converges to $w$ in $L^2(\Omega).$
		\end{proof}

\end{document}